\newtheorem{theorem}{Theorem}[section]
\newtheorem{lemma}[theorem]{Lemma}
\newtheorem{proposition}[theorem]{Proposition}
\newtheorem{corollary}[theorem]{Corollary}
\theoremstyle{definition}
\newtheorem{definition}[theorem]{Definition}
\theoremstyle{remark}
\newtheorem{remark}[theorem]{Remark}
\let\phi\varphi
\newcommand{\Ga}{\Gamma}
\newcommand{\Om}{\Omega}
\newcommand{\Ph}{\Phi}
\newcommand{\ph}{\phi}
\newcommand{\ps}{\psi}
\newcommand{\om}{\omega}
\newcommand{\al}{\alpha}
\newcommand{\La}{\Lambda}
\newcommand{\si}{\sigma}
\newcommand{\ka}{\kappa}
\newcommand{\eps}{\varepsilon}
\newcommand\fg{\mathfrak{g}}
\newcommand{\ba}{\begin{array}}
\newcommand{\ea}{\end{array}}
\newcommand{\beq}{\begin{eqnarray}}
\newcommand{\eeq}{\end{eqnarray}}
\newtheorem{lm}{lemma}
\newtheorem{thee}{theorem}
\newtheorem{proo}{proposition}
\newtheorem{co}{corollary}
\newtheorem{rem}{remark}
\newtheorem{deff}{definition}
\newcommand{\bd}{\begin{deff}}
\newcommand{\ed}{\end{deff}}
\newcommand{\bl}{\begin{lm}}
\newcommand{\el}{\end{lm}}
\newcommand{\bp}{\begin{proo}}
\newcommand{\ep}{\end{proo}}
\newcommand{\bt}{\begin{thee}}
\newcommand{\et}{\end{thee}}
\newcommand{\bc}{\begin{co}}
\newcommand{\ec}{\end{co}}
\newcommand{\brm}{\begin{rem}}
\newcommand{\erm}{\end{rem}}
\def\frak{\mathfrak}
\def\Bbb{\mathbb}
\def\Cal{\mathcal}
\newcommand{\newc}{\newcommand}
\renewcommand{\exp}{\operatorname{exp}}
\newcommand{\id}{\operatorname{id}}
\newcommand{\Fl}{\operatorname{Fl}}
\newcommand{\tr}{\operatorname{tr}}
\renewcommand{\o}{\circ}
\let\ccdot\cdot
\def\cdot{\hbox to 2.5pt{\hss$\ccdot$\hss}}
\newc{\aR}{\mbox{\boldmath{$ R$}}}
\newc{\aS}{\mbox{\boldmath{$ S$}}}
\newc{\aT}{\mbox{\boldmath{$ T$}}}
\newc{\aW}{\mbox{\boldmath{$ W$}}}
\newc{\aK}{\mbox{\boldmath{$ K$}}}
\newc{\aL}{\mbox{\boldmath{$ L$}}}
\newcommand{\nd}{\nabla}
\renewcommand{\and}{\boldsymbol{\nd}}
\let\i=\iota
\newcommand{\nn}[1]{(\ref{#1})}
\newc{\obstrn}[2]{B^{#1}_{#2}}
\newcommand{\rpl}                         
{\mbox{$
\begin{picture}(12.7,8)(-.5,-1)
\put(0,0.2){$+$}
\put(4.2,2.8){\oval(8,8)[r]}
\end{picture}$}}
\newcommand{\lpl}                         
{\mbox{$
\begin{picture}(12.7,8)(-.5,-1)
\put(2,0.2){$+$}
\put(6.2,2.8){\oval(8,8)[l]}
\end{picture}$}}
\newc{\tensor}[1]{#1}
\newc{\Mvariable}[1]{\mbox{#1}}
\newc{\down}[1]{{}_{#1}}
\newc{\up}[1]{{}^{#1}}
\newc{\JulyStrut}{\rule{0mm}{6mm}}
\newc{\midtenPan}{\mbox{\sf S}}
\newc{\midten}{\mbox{\sf T}}
\newc{\midtenEi}{\mbox{\sf U}}
\newc{\ATen}{\mbox{\sf E}}
\newc{\BTen}{\mbox{\sf F}}
\newc{\CTen}{\mbox{\sf G}}
\def\sideremark#1{\ifvmode\leavevmode\fi\vadjust{\vbox to0pt{\vss
 \hbox to 0pt{\hskip\hsize\hskip1em
 \vbox{\hsize3cm\tiny\raggedright\pretolerance10000
 \noindent #1\hfill}\hss}\vbox to8pt{\vfil}\vss}}}%
\numberwithin{equation}{section}
\begin{document}

\markboth{A.\ \v Cap, A.R.\ Gover and M.\ Hammerl}
{Normal BGG solutions and polynomials}

\title{Normal BGG Solutions and Polynomials}

\author{A.\ \v Cap}

\address{
Faculty of Mathematics\\
University of Vienna\\
Nordbergstr. 15\\
1090 Wien\\
Austria\\
Andreas.Cap@univie.ac.at}

\author{A.R.\ Gover}
\address{
Department of Mathematics\\
  The University of Auckland\\
  Private Bag 92019\\
  Auckland 1142\\
  New Zealand;\\
Mathematical Sciences Institute\\
Australian National University \\ ACT 0200, Australia\\
r.gover@auckland.ac.nz}

\author{M.\ Hammerl}
\address{
Department of Mathematics and Computer Science \\
Ernst Moritz Arndt University of Greifswald \\
Walther-Rathenau-Stra{\ss}e 47 \\
17487 Greifswald, Germany \\
matthiasrh@gmail.com
}

\begin{abstract}
  First BGG operators are a large class of overdetermined linear
  differential operators intrinsically associated to a parabolic
  geometry on a manifold. The corresponding equations include those
  controlling infinitesimal automorphisms, higher symmetries, and many
  other widely studied PDE of geometric origin. The machinery of BGG
  sequences also singles out a subclass of solutions called normal
  solutions. These correspond to parallel tractor fields and hence to
  (certain) holonomy reductions of the canonical normal Cartan
  connection. Using the normal Cartan connection, we define a special
  class of local frames for any natural vector bundle associated to a
  parabolic geometry. We then prove that the coefficient functions of
  any normal solution of a first BGG operator with respect to such a
  frame are polynomials in the normal coordinates of the parabolic
  geometry. A bound on the degree of these polynomials in terms of
  representation theory data is derived.

  For geometries locally isomorphic to the homogeneous model of the
  geometry we explicitly compute the local frames mentioned above.
  Together with the fact that on such structures all solutions are
  normal, we obtain a complete description of all first BGG solutions
  in this case.  Finally, we prove that in the general case the
  polynomial system coming from a normal solution is the pull-back of
  a polynomial system that solves the corresponding problem on the
  homogeneous model. Thus we can derive a complete list of potential
  normal solutions on curved geometries. Moreover, questions concerning
  the zero locus of solutions, as well as related finer geometric and
  smooth data, are reduced to a study of polynomial systems and real
  algebraic sets.
\end{abstract}

\keywords{Cartan geometries, holonomy, overdetermined PDE, tractor
  calculus, parabolic geometry}

\subjclass[2010]{53B15, 53C29, 35N10, 53A20, 53A30, 51N15, 53C30}

\maketitle

\section{Introduction}\label{1}

It has long been known that certain natural overdetermined linear
partial differential equations play a fundamental role in differential
geometry. Archetypal examples in Riemannian geometry are the Killing
equation and the conformal Killing equation on various types of tensor
fields. In particular, solutions to these equations on vector fields
are infinitesimal isometries and infinitesimal conformal
isometries. The conformal Killing equation on differential forms (see
e.g.~\cite{semmelmann,leitner-normal}) and the twistor equation, which
is the analogous equation on spinors (see
e.g.~\cite{friedrich-conformalrelation,baum-friedrich-twistors}), have
been intensively studied. More recently, it has been shown that
solutions to the conformal Killing equation on symmetric tensor fields
are equivalent to higher symmetries of the Laplacian, see
\cite{eastwood-laplacian}, and generalisations to other operators have
been obtained, see \cite{gover-silhan-powers}.

For all these equations, many questions arise concerning the nature
and properties of solutions.  For example the question of establishing
the structure of the solution's zero locus has been taken up for
specific equations in many places, see
e.g. \cite{belgun-moroianu-ornea,Derdzinski,habermann-twizero,kuehnel-rademacher-twizero}.
Understanding the structure of solutions is also important because of
a second line of applications of these operators. Aside from
determining notions of symmetry, it is becoming evident that they can
be used to define or characterise important geometric structures. For
example the Poincar\'e-Einstein structures of \cite{fefferman-graham},
which form the basis of a geometric Poisson transform programme
relating conformal and Riemannian geometry \cite{graham-lee-91} as
well as a related scattering programme \cite{graham-zworski}, may be
understood as a conformal manifold equipped with a solution to the
overdetermined equation controlling the conformal-to-Einstein
condition \cite{gover-dirichlet,gover-aes}. In the study of this
description in these the last references, the conformal tractor
connection was used to classify the zero loci of solutions to the
conformal to Einstein equation in a way which highlighted links with
the homogeneous model. This motivated many of the developments in the
current article.

All the equations alluded to above can be understood as special cases
of first BGG equations on parabolic geometries, and this observation
alone suggests powerful generalisations, see
e.g. \cite{cap-infinitaut}. For a semisimple Lie group $G$ with Lie
algebra $\frak{g}$ and a parabolic subgroup $P\subset G$, a parabolic
geometry of type $(G,P)$ is an $n$-manifold $M$ equipped with a
$P$--principal bundle $\mathcal{G} \to M$ endowed with a (suitably
\textit{normal}) \textit{Cartan connection}
$\om\in\Om^1(\mathcal{G},\frak{g})$ (see Definition
\ref{def:par-geom}). The homogeneous model of the geometry is the
``Klein structure'' consisting of $G$ viewed as a $P$-principal bundle
over the homogeneous space $G/P$.  Parabolic geometries form a broad
class of structures which includes conformal, CR, and projective
geometries as special cases. For example in the case of signature
$(p,q)$ conformal geometry (with $p+q=n$), $G=SO(p+1,q+1)$ and
(identifying $G$ with its linear defining representation) $P$ is the
parabolic stabilising a nominated null ray.

On any parabolic geometry of type $(G,P)$ the normal Cartan connection
universally determines, for each irreducible representation
$\mathbb{V}$ of $G$, a canonical sequence of differential operators
with the property that on the model $G/P$ this sequence is a finite
resolution of $\mathbb{V} $ by linear differential operators
\cite{CSSBGG}. These sequences are known as BGG sequences, due to the
relations to the algebraic resolutions of Bernstein-Gelfand-Gelfand
and others \cite{BGG,lepowsky}. The first operator in each such
sequence is called a first BGG operator, its equation the first BGG
equation, and ranging over the possible $(G,P,\mathbb{V})$ we obtain
the class mentioned above.

First BGG equations have finite dimensional kernel and they have no
non-trivial solutions in general. This is in fact part of their
importance: the integrability conditions for the existence of
non-trivial solutions are often extremely interesting non-linear
systems intrinsic to the structure (such as the Einstein equation in
certain cases). Normal (first) BGG solutions are a distinguished class
of solutions; the solutions in the class are characterised by the fact
that they correspond in a precise way to a holonomy reduction of the
normal Cartan connection (see e.g.\ \cite{leitner-normal}).  The
correspondence is mediated by what is called the tractor connection;
this linear connection is induced by the Cartan connection on the
bundle associated to $\mathcal{G}$ via the relevant irreducible
$G$-representation $\mathbb{V}$ (see Proposition
\ref{prop:firstbgg}). An important point is that on any homogeneous
model $G/P$, for any $G$-irreducible $\mathbb{V}$ all solutions of the
corresponding first BGG operator are normal and the vector space of
such solutions is isomorphic to $\mathbb{V}$.

For the special case of projective structures, we have developed in
\cite{CGH-projective} a new approach to the study of first BGG
solutions. In particular, that article discusses applications to the
construction and study of new compactifications, which emphasise the
geodesic structure rather than conformal aspects. The developments in
that article exhibited two main features, the polynomiality of normal
solutions, and the comparison between curved geometries and the
homogeneous model, which looks particularly simple on the level of
tractor bundles. It turns out that the latter aspect can be understood
in a conceptual way in the context of holonomy reductions of general
Cartan geometries. This is developed in \cite{CGH-holonomy}, which may
be viewed as a companion article to the current work, with parabolic
geometries providing the most important examples. It turns out that
holonomy reductions of parabolic geometries determined by normal
solutions of first BGG operators govern a host of interesting
constructions which relate apparently different geometries, such as
\cite{bryant-3planes,fefferman,nurowski-metric,mrh-sag-twistors}. 

The main aim of the current article is to extend the polynomiality
results from \cite{CGH-projective} to normal first BGG solutions on
general parabolic geometries. This leads to remarkably strong results
on the nature of these solutions. Given a point in the manifold, we
first have to choose a point in the fibre of the Cartan bundle, so
there is a freedom parametrised by the parabolic subgroup $P$. Having
made this choice, we get local normal coordinates as well as a special
class of local frames for all natural vector bundles associated to the
parabolic geometry in question. We then prove that the coefficient
functions of any normal solution with respect to such a frame are
polynomials in the normal coordinates. The degree of these polynomials
is known a priori in terms of data associated with the equation. These
results form the main theorem, which is Theorem \ref{thm:basic}.  The
proof of that theorem is constructive: the polynomial is produced
directly from the parallel tractor field via a preferred local
trivialisation of the tractor bundle that we also describe.

As mentioned above, via the parallel tractor field corresponding to a
normal solution of a first BGG operator, there is a connection to
holonomy reductions of Cartan geometries as studied in
\cite{CGH-holonomy}. One of the main features of such a reduction is an
induced stratification of the underlying manifold into initial
submanifolds, which reflects a certain orbit decomposition of the
homogeneous model. The zero locus of the corresponding normal solution
is encoded in this data (see Section \ref{types} below); in fact, the
full stratification reveals that considerable further information is
available. Understanding this stratification from a more analytic
point of view requires a description by functions and this is
precisely the information captured by the polynomial systems we
discover and describe here. Indeed, our results may be viewed as
showing that the submanifolds arising can be understood as
generalising, in a natural way, a class of projective algebraic sets.

In the case of the homogeneous model $G/P$ we give an explicit
description of the special frames on tensor bundles in Proposition
\ref{prop:homog-frame}, thus obtaining a completely explicit
description of all solutions of the first BGG equations on such
bundles. (Then the same result holds locally on any structure which is
locally flat, i.e.~locally isomorphic to the model). A second
important, and surprising, consequence of the construction is that on
any curved parabolic geometry of type $(G,P)$ the polynomial system
describing a normal solution is actually the pull-back, via a special
diffeomorphism, of the polynomial system describing a normal solution
(of the same first BGG equation) on $G/P$, see Proposition
\ref{prop:comparison}. This diffeomorphism comes from a comparison map
that we construct in Definition \ref{def:comp}. On the homogeneous
space $G/P$ the structures defined by the polynomial system can often
be well understood via classical techniques (or are even well known),
and our result here gives a precise statement to which extent these
features must hold for a general normal solution.

Although the general theory is simple and universal, each particular
case of a normal solution of a given first BGG equation on a specific
parabolic geometry typically carries considerable information and
detail.  In Section \ref{appl} we illustrate how this can be obtained
in a completely explicit way, and how it yields applications in
familiar settings. Examples include conformal first BGG equations on
densities, conformal Killing vector fields, conformal Killing
$r$-forms, Killing vector fields, as well as several examples from
projective geometry.  In addition we show in that section how the
generalised homogeneous projective coordinates, discovered in
\cite{CGH-projective}, arise using the very different perspective we
develop here, see Proposition \ref{prop:proj1}. This then enables the
corresponding notion to be developed for conformal geometry, which is
formalised in Definition \ref{def:conf-homog}.

\section{Normal BGG solutions}\label{2}
We start by very briefly recalling the algebraic background and the
definition of parabolic geometries, referring to Section 3.1 of
\cite{cap-slovak-book} for details.

\subsection{$|k|$--graded Lie algebras and their
  representations}\label{2.1} The starting point for defining a
parabolic geometry is a real or complex semisimple Lie algebra $\frak
g$ endowed with a so--called $|k|$--grading, i.e.~a decomposition
$\fg=\fg_{-k}\oplus\dots\oplus\fg_k$ such that
$[\fg_i,\fg_j]\subset\fg_{i+j}$ for some $k\geq 1$. We make the
standard assumptions that none of the simple ideals of $\fg$ is
contained in $\fg_0$ and that the subalgebra
$\fg_-:=\fg_{-k}\oplus\dots\oplus\fg_{-1}$ is generated by
$\fg_{-1}$. In particular, this implies that $\fg_0\subset\fg$ is a
subalgebra which acts on each $\fg_i$ via the restriction of the
adjoint representation.

Next, we consider the associated filtration
$\fg=\fg^{-k}\supset\fg^{-k+1}\supset\dots\supset\fg^k$ defined by
$\fg^i:=\fg_i\oplus\dots\oplus\fg_k$. This makes $\fg$ into a filtered
Lie algebra in the sense that $[\fg^i,\fg^j]\subset\fg^{i+j}$. In
particular, this implies that $\frak p:=\fg^0$ is a subalgebra of
$\fg$, which acts on each filtration component $\fg^i$ via the
restriction of the adjoint action. In particular, $\frak p_+=\fg^1$ is
an ideal in $\frak p$, which is nilpotent by definition. It turns out
that the subalgebra $\frak p\subset\frak g$ is always a parabolic
subalgebra in the sense of representation theory, and that any
parabolic subalgebra can be realised via an appropriate
$|k|$--grading. It further turns out that $\frak p_+\subset\frak p$ is
the nilradical of $\frak p$ and $\frak g_0$ is a reductive complement
to $\frak p_+$ in $\frak p$, which is usually called a Levi--factor.

In this article, we will only consider finite dimensional
representations.  Any such representation $\Bbb V$ of $\fg$ inherits a
grading, which is compatible with the grading on $\fg$. It can be
shown that the grading of $\fg$ is the eigenspace decomposition with
respect to the adjoint action of a uniquely determined element
$E\in\frak g$, called the \textit{grading element}. This element then
also acts diagonalisably on each irreducible representation $\Bbb V$
of $\fg$ and one can interpret the eigenspace decomposition as a
decomposition $\Bbb V=\Bbb V_0\oplus\dots\oplus\Bbb V_N$ such that
$\fg_i\cdot\Bbb V_j\subset\Bbb V_{i+j}$. (In many cases it is more
natural to write this decomposition in the form $\Bbb
V_{-\ell}\oplus\dots\oplus\Bbb V_{\ell}$, but one can always shift the
degrees to obtain the form described above.)

\subsection{Parabolic geometries and BGG sequences}\label{2.2}

Take a $|k|$--graded Lie algebra $\fg$ as in Section \ref{2.1} and a
Lie group $G$ with Lie algebra $\fg$. Then it can be shown that the
normaliser $N_G(\frak p)$ of $\frak p$ in $G$ has Lie algebra $\frak
p$, and one makes a choice of a \textit{parabolic subgroup} $P\subset
G$, i.e.~a subgroup lying between $N_G(\frak p)$ and its connected
component of the identity. Then the adjoint action of each element of
$P$ preserves each of the filtration components $\fg^i\subset\fg$, and
one defines the \textit{Levi--subgroup} $G_0\subset P$ to consist of
all elements whose adjoint action preserves the grading of $\fg$. One
shows that $G_0\subset P$ corresponds to the Lie subalgebra
$\fg_0\subset\frak p$. Moreover, the exponential mapping defines a
diffeomorphism from $\frak p_+$ onto a closed normal subgroup
$P_+\subset P$ such that $P$ is the semi--direct product of $G_0$ and
$P_+$.

\begin{definition}\label{def:par-geom}
(1) A \textit{parabolic geometry} of type $(G,P)$ on a smooth manifold
  $M$ is given by a principal $P$--bundle $p:\Cal G\to M$ endowed with
  a Cartan connection $\om\in\Om^1(\Cal G,\frak g)$. By definition,
  this means that for each $u\in\Cal G$ the map $\om_u:T_u\Cal
  G\to\frak g$ is a linear isomorphism, that $\om$ is equivariant with
  respect to the principal right action of $P$, and reproduces the
  generators of fundamental vector fields.

(2) The \textit{homogeneous model} for parabolic geometries of type
  $(G,P)$ is the homogeneous space $G/P$ (which is a generalised flag
  manifold) with the left Maurer--Cartan form as the Cartan
  connection.
\end{definition}

Parabolic geometries which satisfy the additional conditions of
regularity and normality are equivalent encodings (in the categorical
sense) of certain underlying structures, see Section 3.1 of
\cite{cap-slovak-book} and Chapter 4 of this reference for many
examples. These underlying structures include classical projective
structures, conformal structures, almost quaternionic structures,
hypersurface type CR structures, and several types of generic
distributions. A congenial feature of the theory we develop below is
that it does not depend at all on the details of the correspondence to
underlying structures, nor does it need explicit details of the
precise definitions of regularity and normality. Thus we shall
simply take the parabolic geometry as an input and, for the general
results (which form the main theorems), this results in an efficient
simultaneous treatment of the entire class of parabolic geometries.

Forming associated bundles to the Cartan bundle, any representation of
the Lie group $P$ gives rise to a natural bundle on parabolic
geometries of type $(G,P)$. In particular, one can use the
restrictions to $P$ of (irreducible) representations of $G$, thus
obtaining so--called \textit{tractor bundles}. While these bundles may
at first seem unusual as geometric objects, they have the critical
advantage that (unlike general associated bundles) they inherit a
canonical linear connection, called the \textit{tractor connection}
from the Cartan connection $\om$. The general theory of tractor
bundles is developed in \cite{cap-gover-tractor}. More conventional
geometric objects like tensors and spinors are obtained by forming
so--called \textit{completely reducible} bundles, which are associated
to completely reducible representations of $P$. It is well-known that
these representations are obtained from completely reducible
representations of $G_0$ via the projection $P\to G_0=P/P_+$.

The tractor connections can be used to construct higher order
differential operators acting between completely reducible bundles
which are intrinsic to the geometric structure via the machinery of
BGG sequences, which was introduced in \cite{CSSBGG} and improved in
\cite{BGG-Calderbank-Diemer}. For our current purposes, we just need a
small amount of information concerning the first operator in such a
sequence. Given an irreducible representation $\Bbb V$ of $\fg$
consider the grading $\Bbb V=\Bbb V_0\oplus\dots\oplus\Bbb V_N$ as in Section
\ref{2.1} and the induced filtration $\{\Bbb V^i\}$ of $\Bbb V$
defined by $\Bbb V^i=\Bbb V_i\oplus\dots\oplus\Bbb V_N$. Then each of
the filtration components $\Bbb V^i$ is $P$--invariant and the induced
action of $P_+$ on $\Bbb V^i/\Bbb V^{i+1}$ is trivial, so this is a
completely reducible representation of $P$. In particular, this
applies to the quotient $\Bbb V/\Bbb V^1=:\Bbb H_0$, which is
immediately seen to be even an irreducible representation of $P$. From
the construction it is clear that $\Bbb H_0$ is obtained from the
$G_0$--representation $\Bbb V_0$ via the quotient projection $P\to
G_0$.

Passing to associated bundles, we write $\Cal VM:=\Cal G\times_P\Bbb
V$. The $P$--invariant filtration $\{\Bbb V^i\}$ of $\Bbb V$
corresponds to a filtration of $\Cal VM$ by smooth subbundles $\Cal
V^iM$. We then have the irreducible quotient bundle $\Cal H_0M:=\Cal
VM/\Cal V^1M$ and a natural projection $\Pi:\Cal VM\to\Cal H_0M$. The
machinery of BGG sequences constructs a completely reducible
subquotient $\Cal H_1M$ of the bundle $T^*M\otimes\Cal VM$ of $\Cal
VM$--valued one--forms and a natural differential operator $D^{\Cal
  V}:\Ga(\Cal H_0M)\to\Ga(\Cal H_1M)$, called the \textit{first BGG
  operator} associated to $\Bbb V$. Given $\Bbb V$, the
representations $\Bbb H_0$ and $\Bbb H_1$ can be determined
algorithmically, which also leads to a description of the order and
the principal part of the operator $D^{\Cal V}$. Simple algorithms for
the formulae for the full operators $D^{\Cal V}$ are available and
explicit formulae in low order are explicitly available
\cite{calderbank-diemer-soucek,gover-89}. Let us collect the main
information we will need in the sequel. Proofs of the following facts
can be found in \cite{CSSBGG}.

\begin{proposition}\label{prop:firstbgg}
Let us denote by $\Pi:\Cal VM\to\Cal H_0M$ the canonical projection
from a tractor bundle to its canonical quotient as well as the induced
operator on sections.

(1) The kernel of $D^{\Cal V}$ is always finite dimensional with
dimension bounded by $\dim(\Bbb V)$. 

(2) If $s\in\Ga(\Cal VM)$ is parallel for the tractor connection on
$\Cal VM$, then $\Pi(s)\in\Ga(\Cal H_0)$ lies in the kernel of
$D^{\Cal V}$. 

(3) The restriction of $\Pi$ to the space of sections which are
parallel for the tractor connection is injective. 

(4) In the case of the homogeneous model $G/P$ the tractor connection
is flat with trivial holonomy and $\Pi$ defines a linear isomorphism
from the space of parallel sections of $\Cal V(G/P)$ (which can be
identified with $\Bbb V$) onto the kernel of $D^{\Cal V}$. 
\end{proposition}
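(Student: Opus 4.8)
The plan is to base everything on the BGG splitting operator, which I would recall first. Write $\partial^*$ for the algebraic, $P$--equivariant Kostant codifferential on the bundles $\Lambda^k T^*M\otimes\Cal VM$ of $\Cal VM$--valued forms; by definition $\Cal H_kM=\ker\partial^*/\im\partial^*$ in each degree, and Kostant's theorem guarantees that the algebraic Laplacian $\partial\partial^*+\partial^*\partial$ is invertible on the complement of its kernel. Applying this invertibility degree by degree along the filtration $\{\Cal V^iM\}$ produces, for each $\sigma\in\Ga(\Cal H_0M)$, a unique section $L(\sigma)\in\Ga(\Cal VM)$, the \emph{splitting operator}, characterised by $\Pi(L(\sigma))=\sigma$ and $\partial^*(\nabla L(\sigma))=0$. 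By construction $L$ is a linear differential operator and $D^{\Cal V}=\Pi_{\Cal H_1}\circ\nabla\circ L$, where $\Pi_{\Cal H_1}\colon\ker\partial^*\to\Cal H_1M$ is the tensorial projection onto the quotient.

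Parts (2) and (3) are then immediate. If $\nabla s=0$ then a fortiori $\partial^*(\nabla s)=0$, so $s$ satisfies the two conditions defining $L(\Pi(s))$; uniqueness of the splitting operator forces $s=L(\Pi(s))$, whence $D^{\Cal V}(\Pi(s))=\Pi_{\Cal H_1}(\nabla s)=0$, proving (2). For (3), the same identity $s=L(\Pi(s))$ and the linearity of $L$ show that a parallel section with $\Pi(s)=0$ must equal $L(0)=0$, so $\Pi$ is injective on parallel sections.

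For (1) I would invoke the prolongation of the first BGG operator. If $\sigma\in\Ker D^{\Cal V}$ and $s:=L(\sigma)$, then $\partial^*(\nabla s)=0$ puts $\nabla s$ in $\ker\partial^*$, while $\Pi_{\Cal H_1}(\nabla s)=D^{\Cal V}(\sigma)=0$ makes its harmonic part vanish; hence $\nabla s\in\im\partial^*$. The real work is to re-express this $\im\partial^*$--valued quantity algebraically through $s$, i.e.\ to produce a bundle map $\Phi\colon\Cal VM\to T^*M\otimes\Cal VM$ with $\nabla s=\Phi(s)$, so that $s$ is parallel for the linear connection $\tilde\nabla:=\nabla-\Phi$ on $\Cal VM$. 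This closing--up of the prolonged system, again powered by the invertibility of the algebraic Laplacian on $\im\partial^*$, is the main obstacle in the whole argument. Granting it, $L$ embeds $\Ker D^{\Cal V}$ into the space of $\tilde\nabla$--parallel sections of $\Cal VM$, whose dimension is at most the rank $\dim\Bbb V$ of $\Cal VM$; this gives (1).

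Finally I would obtain (4) by combining the previous parts with a direct computation on $G/P$. There $\Cal G=G$ and $\om$ is the left Maurer--Cartan form, which satisfies $d\om+\tfrac12[\om,\om]=0$; this is exactly the vanishing of the curvature of the induced tractor connection, so $\nabla$ is flat, and simple connectivity of $G/P$ makes its holonomy trivial. The $\nabla$--parallel sections of $\Cal V(G/P)$ are then precisely the $P$--equivariant maps $g\mapsto g^{-1}\cdot v$, $v\in\Bbb V$, yielding a linear isomorphism from the space of parallel sections onto $\Bbb V$. By (2) the projection $\Pi$ sends this $\dim\Bbb V$--dimensional space into $\Ker D^{\Cal V}$, by (3) it does so injectively, and by (1) the target has dimension at most $\dim\Bbb V$; hence $\Pi$ restricts to the claimed linear isomorphism.
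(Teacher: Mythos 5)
The paper offers no internal proof of this proposition---it defers to \cite{CSSBGG}---so your argument must stand on its own, and its framework (the splitting operator $L$ characterised by $\Pi(L(\sigma))=\sigma$ and $\partial^*(\nabla L(\sigma))=0$, with $D^{\Cal V}=\Pi_{\Cal H_1}\circ\nabla\circ L$) is indeed the standard one from that literature. Parts (2) and (3) are correct and complete as written. Part (1), however, is a statement of strategy rather than a proof: the existence of a bundle map $\Phi$ with $\nabla s=\Phi(s)$ for every $s=L(\sigma)$, $\sigma\in\Ker D^{\Cal V}$, is precisely the prolongation theorem of \cite{BCEG} and \cite{hsss}, and it does not follow formally from invertibility of the Kostant Laplacian on $\im\partial^*$; it requires an inductive modification of the connection through the grading, and in the construction of \cite{hsss} the splitting operator is modified along with the connection, so one cannot even assert that the \emph{original} $L(\sigma)$ is parallel for the modified connection---only that $\Ker D^{\Cal V}$ injects into the parallel sections of \emph{some} connection on $\Cal VM$. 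You flag this honestly as ``the main obstacle'' and then grant it, but that means (1) is cited, not proven.

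Part (4) contains a genuine error: you deduce trivial holonomy from simple connectivity of $G/P$, but real generalised flag manifolds need not be simply connected. The paper's own conformal example has homogeneous model $S^p\times S^q$ (end of Section \ref{conformal}), which in Lorentzian signature $p=1$ is $S^1\times S^{n-1}$; likewise $G=SL(2,\Bbb R)$ with $P$ a Borel subgroup gives $G/P\cong S^1$. The correct argument is the one the paper invokes in the proof of Proposition \ref{prop:comparison}: since $\Bbb V$ is a representation of all of $G$ and not merely of $P$, the map $[(g,v)]\mapsto(gP,g\cdot v)$ is a canonical \emph{global} trivialisation of $\Cal V(G/P)=G\times_P\Bbb V$ under which the tractor connection is the trivial flat connection. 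Equivalently, one verifies directly---a one-line computation with the Maurer--Cartan form, exactly as in the proof of Lemma \ref{lem:basic}---that each equivariant function $g\mapsto g^{-1}\cdot v$ defines a parallel section; these sections are pointwise linearly independent and span every fibre, which yields flatness, trivial holonomy, and the isomorphism with $\Bbb V$ simultaneously, with no topological hypothesis on $G/P$. Since you do write down these very sections, the repair is minor, but as stated the route through the fundamental group fails.
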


\begin{definition}\label{def:norm-sol}
(1) The \textit{first BGG equation} determined by $\Bbb V$ is the
  natural differential equation $D^{\Cal V}(\al)=0$ on $\al\in\Ga(\Cal
  H_0M)$.

(2) A solution of the first BGG equation determined by $\Bbb V$
  is called \textit{normal} if it is of the form $\Pi(s)$ for a
  parallel section $s$ of the tractor bundle $\Cal VM$. 
\end{definition}

Note that part (4) of Proposition \ref{prop:firstbgg} says that on the
homogeneous model (and hence also on geometries locally isomorphic to
the homogeneous model) any solution of a first BGG equation is
normal. It should be remarked at this point, that the name ``normal''
reflects that the solutions concerned correspond to parallel sections
of the normal (in the sense of \cite{cap-gover-tractor}) tractor
connection, and this terminology is already established in the
literature (cf.\ \cite{leitner-normal}). It is not directly related to
  normal coordinates and normal frames which we will discuss next.

\subsection{Normal coordinates and normal frames}\label{nframes}
We now come to the basic construction needed to describe parallel
sections of tractor bundles and hence normal solutions of first BGG
operators. 

Let us start with a parabolic geometry $(p:\Cal G\to M,\om)$ of some
fixed type $(G,P)$. Fix a point $u_0\in\Cal G$ and put $x_0:=p(u_0)\in
M$. Consider the subalgebra $\fg_-\subset\fg$ which is complementary
to $\frak p$ as in \ref{2.1}. For any $X\in\fg_-$ we can consider the
constant vector field $\tilde X\in\frak X(\Cal G)$ which is
characterised by $\om(\tilde X)(u)=X$ for all $u\in\Cal G$. There is
an open neighbourhood $V\subset\fg_-$ of zero such that the flow
$\Fl^{\tilde X}_t(u_0)$ through $u_0$ is defined up to time $t=1$ for
all $X\in V$. Then $\Ph(X):=\Fl^{\tilde X}_1(u_0)$ defines a smooth
map $\Ph:V\to \Cal G$ and we define $\ph:=p\o\Ph:V\to M$.

By construction, $\ph(0)=x_0$ and the derivative $T_0\ph:\fg_-\to
T_{x_0}M$ at this point is given by $X\mapsto
T_{u_0}p\cdot\om_{u_0}^{-1}(X)$. Since $\fg_-$ is complementary to
$\frak p$, this is a linear isomorphism by the defining properties of
a Cartan connection. Hence we can shrink $V$ in such a way that $\ph$
defines a diffeomorphism from $V$ onto an open neighbourhood $U$ of
$x_0$ in $M$.

\begin{definition}
(1) The \textit{normal chart} determined by $u_0$ is the
  diffeomorphism $\ph^{-1}:U\to V\subset \fg_-$. Choosing a basis in
  $\fg_-$, we get induced local coordinates on $M$ called the
  \textit{normal coordinates} determined by $u_0$.

(2) The \textit{normal section} of  $\Cal G|_U$ determined by $u_0$ is
  the smooth map $\si:U\to \Cal G$ characterised by
  $\si(p(\Ph(X))=\Ph(X)$ for all $X\in V$. 
\end{definition}

\begin{remark}
One can slightly vary the construction of the normal section as
follows. Since $\Cal G\to M$ is a principal $P$--bundle, any closed
subgroup of $P$ acts freely on $\Cal G$ via the principal right
action. Applying this to the closed subgroup $P_+\subset P$ from
\ref{2.1}, one shows that $\Cal G_0:=\Cal G/P_+$ is a smooth manifold,
$p:\Cal G\to M$ descends to a map $p_0:\Cal G_0\to M$ and this is a
principal bundle with structure group $P/P_+=G_0$. There is an obvious
projection $q :\Cal G\to\Cal G_0$ which is actually a principal
bundle with structure group $P_+$. 
As we have seen,
$p_0\o q\o\Ph=p\o\Ph$ is a diffeomorphism, so $ q\o\Ph$ meets any
fibre of $\Cal G_0$ in at most one point. Hence the map sending
$ q(\Ph(X))$ to $\Ph(X)$ naturally extends to a $G_0$--equivariant
smooth section of $ q$ over $(p_0)^{-1}(U)$. In the language of
section 5.1.12 of \cite{cap-slovak-book}, this is the \textit{normal Weyl
  structure} centred at $x_0$ which is determined by $u_0\in
p^{-1}(x_0)$. In this way, one gets additional data, e.g.~the so
called Weyl connection on associated bundles, but we will not need
those in the sequel. 
\end{remark}

\medskip

Recall next, that the natural vector bundles for a parabolic geometry
of type $(G,P)$ are the associated vector bundles to the Cartan
bundle, so they are equivalent to representations of the parabolic
subgroup $P$. Likewise, natural bundle maps between such bundles are
equivalent to $P$--equivariant maps between the inducing
representations. Of course, a local section of a principal bundle
gives rise to a local trivialisation and hence also to local
trivialisations of all associated bundles. 

\begin{definition}\label{def:normtriv}
(1) Let $(p:\Cal G\to M,\om)$ be a parabolic geometry of type $(G,P)$,
  $u_0\in\Cal G$ a point and $\Bbb W$ a representation of $P$. 

The \textit{normal trivialisation} of the associated bundle $\Cal
WM=\Cal G\times_P\Bbb W$ determined by $u_0$ is the trivialisation induced
by the normal section determined by $u_0$. 

(2) A \textit{normal frame} for $\Cal WM$ determined by $u_0$ is a
frame obtained from a basis of $\Bbb W$ via a normal trivialisation. 
\end{definition}

Explicitly, given a representation $\Bbb W$ of $P$, the corresponding
natural vector bundle $\Cal WM$ is $\pi:\Cal G\times_P\Bbb W\to M$, where
$\Cal G\times_P\Bbb W$ is the quotient of $\Cal G\times\Bbb W$ by the action
of $P$ defined by $(u,w)\cdot b:=(u\cdot b,b^{-1}\cdot w)$. Here we
use the principal right action in the first component and the given
representation of $P$ on $\Bbb W$ in the second component. The
trivialisation induced by $\si$ is then given by the map $U\times\Bbb
W\to\pi^{-1}(U)$ which maps $(x,w)$ to the $P$--orbit of
$(\si(x),w)$. Conversely, given $(u,w)\in\Cal G\times\Bbb W$ with
$x=p(u)\in U$, there is a unique element $b\in P$ such that
$u=\si(x)\cdot b$, so the $P$--orbit of $(u,w)$ corresponds to
$(x,b\cdot w)$ in the trivialisation. By construction, the natural
bundle map induced by a $P$--equivariant map $\al:\Bbb W\to\Bbb W'$
corresponds to $\id_U\times\al:U\times\Bbb W\to U\times\Bbb W'$ in this
trivialisation.

It will also be very useful in the sequel to have a description of the
trivialisations determined by $\si$ in terms of smooth
sections. Recall that smooth sections of the bundle $\Cal G\times_P\Bbb
W\to M$ over $U\subset M$ are in bijective correspondence with smooth
maps $f:p^{-1}(U)\to\Bbb W$, which are $P$--equivariant in the sense
that $f(u\cdot b)=b^{-1}\cdot f(u)$. In the local trivialisation
determined by $\si$, this section is simply given by $x\mapsto
(x,f(\si(x)))$, so it simply corresponds to the function $f\o\si:
U\to\Bbb W$.

Since natural bundle maps are induced by equivariant maps between the
representations inducing the bundles, they are nicely compatible with
normal trivialisations and with normal frames. Let us just formulate a
particularly important case explicitly.

\begin{lemma}\label{lem:adapted}
Let $\Bbb W$ and $\Bbb W''$ be representations of $P$ and let $\Bbb
W'\subset\Bbb W$ be a $P$--invariant subspace such that $\Bbb W/\Bbb
W'\cong\Bbb W''$, and let $n'$ and $n''$ be the dimensions of $\Bbb
W'$ and $\Bbb W''$, respectively. Let $(p:\Cal G\to M,\om)$ be a
parabolic geometry of type $(G,P)$, and let $\tau:\Cal WM/\Cal
W'M\to\Cal W''M$ be the induced isomorphism of associated bundles.

Then for any point $u_0\in\Cal G$ there is a normal frame of $\Cal WM$
over the corresponding subset $U$ which has the form
$\{s_1,\dots,s_{n'},s_{n'+1},\dots,s_{n'+n''}\}$ where
$\{s_1,\dots,s_{n'}\}$ is a normal frame for $\Cal W'M$ over $U$ and
$\{\tau(s_{n'+1}),\dots,\tau(s_{n'+n''})\}$ is a normal frame for $\Cal
W''M$ over $U$.
\end{lemma}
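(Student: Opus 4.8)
The plan is to exploit the functoriality of the normal trivialisation determined by $u_0$. As recalled in the discussion just preceding the statement, a single normal section $\si:U\to\Cal G$ simultaneously trivialises $\Cal WM$, $\Cal W'M$ and $\Cal W''M$ over the same $U$, and this trivialisation intertwines any $P$--equivariant map $\al:\Bbb W\to\Bbb W'$ with $\id_U\times\al$ on the trivialised bundles. Thus the only work is to choose a basis of $\Bbb W$ adapted to the filtration $\Bbb W'\subset\Bbb W$ and to track it through the trivialisations; no genuine geometry is involved beyond the already-established naturality.

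Concretely, I would first pick a basis $\{w_1,\dots,w_{n'}\}$ of $\Bbb W'$ and extend it to a basis $\{w_1,\dots,w_{n'+n''}\}$ of $\Bbb W$. Applying the normal trivialisation of $\Cal WM$ to these vectors produces, by Definition \ref{def:normtriv}, a normal frame $\{s_1,\dots,s_{n'+n''}\}$ of $\Cal WM$, where $s_i(x)$ is the $P$--orbit of $(\si(x),w_i)$. For the subframe I would apply the intertwining property to the $P$--equivariant inclusion $\iota:\Bbb W'\into\Bbb W$: the induced bundle map is precisely the inclusion $\Cal W'M\into\Cal WM$, and since it corresponds to $\id_U\times\iota$ in the trivialisations, the normal frame section of $\Cal W'M$ attached to $w_i$ (for $i\leq n'$) is carried exactly to $s_i$. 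Hence $\{s_1,\dots,s_{n'}\}$ lies in $\Cal W'M$ and is the normal frame of $\Cal W'M$ obtained from the chosen basis of $\Bbb W'$. Dually, I would apply the property to the $P$--equivariant composite $\Bbb W\to\Bbb W/\Bbb W'\cong\Bbb W''$; its images of $w_{n'+1},\dots,w_{n'+n''}$ form a basis of $\Bbb W''$, and the induced bundle map is the composite of the quotient projection $\Cal WM\to\Cal WM/\Cal W'M$ with $\tau$, i.e.\ the very map denoted $\tau(s_i)$ in the statement. Compatibility then identifies $\{\tau(s_{n'+1}),\dots,\tau(s_{n'+n''})\}$ with the normal frame of $\Cal W''M$ coming from that basis.

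I do not expect a serious obstacle: the assertion amounts to the statement that the normal trivialisation is a functor on $P$--representations, so everything reduces to linear algebra inside $\Bbb W$ together with the naturality recorded before the lemma. The one point requiring a moment's care is to check that the section $s_i$ built inside $\Cal WM$ from a vector $w_i\in\Bbb W'$ genuinely coincides with the section built inside the smaller bundle $\Cal W'M$ from the same vector, and likewise that passing to the quotient commutes with the trivialisation; both are exactly what the intertwining property for $\iota$ and for the quotient projection deliver.
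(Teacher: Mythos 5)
Your proposal is correct and follows exactly the paper's argument: choose a basis of $\Bbb W'$, extend it to a basis of $\Bbb W$, let the extra vectors descend to a basis of $\Bbb W''\cong\Bbb W/\Bbb W'$, and invoke the naturality of normal trivialisations under the $P$--equivariant inclusion and quotient maps. The paper's proof is a three-line version of the same reasoning; your additional detail merely spells out the intertwining checks that the paper leaves implicit.
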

\begin{proof}
We just have to choose a basis of $\Bbb W'$ and extend it to a basis
of $\Bbb W$. The additional elements will then descend to a basis of
$\Bbb W/\Bbb W'$, so there is a corresponding basis of $\Bbb
W''$. Then the normal frames of the three induced bundles determined
by these three bases will be related in the way claimed in the lemma.
\end{proof}

\subsection{The case of the homogeneous model}\label{homog}
The homogeneous model of parabolic geometries of type $(G,P)$ is the
principal $P$--bundle $p:G\to G/P$ together with the (left) Maurer
Cartan form as a Cartan connection. For this example, we can describe
normal sections and normal coordinates in a completely explicit way.
We also obtain an explicit description of the normal frames of the
tangent bundle. By naturality of normal frames, this also provides
normal frames for all tensor bundles. For more complicated examples of
parabolic geometries, which involve a non--trivial filtration of the
tangent bundle, we can obtain normal frames adapted to the filtration
in this way. Via Lemma \ref{lem:adapted}, these can be used to obtain
normal frames of the quotients of subsequent filtration components
which provide the main constituents for natural bundles for such
geometries.

The results we are going to prove in the sequel all describe the
component functions obtained by expanding certain sections in terms of
a normal frame. Together with the description of normal frames we will
prove now, we get a complete understanding of first BGG solutions in
the case of the homogeneous model. To obtain similar results for
general geometries, one needs an explicit description of a normal
frame, which essentially amounts to getting an explicit description of
the canonical Cartan connection.

By homogeneity, we can take the distinguished point in the total space
of the bundle $G\to G/P$ to be the identity element $e$. By
definition, the constant vector field $\tilde X\in\frak X(G)$ for the
Maurer Cartan form generated by $X\in\fg$ is simply the left invariant
vector field $L_X$. Hence $\Fl^{\tilde X}_t(e)=\exp(tX)$,
$\Ph(X)=\exp(X)$ and $\ph(X)=\exp(X)P$ for $X\in\fg_-$. It is well
known that in this case $\ph$ is defined on all of $\fg_-$ and it
defines a diffeomorphism onto an open subset of $G/P$. The normal Weyl
structure on this open subset of $G/P$ obtained in this way is the
\textit{very flat Weyl structure} as described in Example 5.1.12 of
\cite{cap-slovak-book}.

To formulate the description of normal frames for the tangent bundle,
recall that there is a unique connected and simply connected Lie group
$G_-$ with Lie algebra $\frak g_-$. This is often called the
\textit{Carnot--group} associated to $\fg_-$. It is well known that
the exponential map for this group defines a global diffeomorphism
$\exp_{G_-}:\fg_-\to G_-$. Now any vector $X\in\fg_-$ generates a left
invariant vector field on $G_-$. In particular, choosing a basis for
$\fg_-$ we obtain a global left invariant frame for the tangent bundle
$TG_-$.

\begin{proposition}\label{prop:homog-frame}
In the normal chart determined by $e\in G$, the normal frames for the
tangent bundle $T(G/P)$ are exactly the pullbacks along the
diffeomorphism $\exp_{G_-}:\fg_-\to G_-$ of the left invariant frames
of the tangent bundle $TG_-$. 

In particular, if $\fg_-$ is abelian, then these are exactly the
holonomic frames of coordinate vector fields determined by a choice of
basis of $\fg_-$.  
\end{proposition}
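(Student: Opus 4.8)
The plan is to express the normal frame of $T(G/P)$ directly in terms of the normal section $\si$, and then to recognise the resulting vector fields, once read in the normal chart, as left invariant fields on the Carnot group $G_-$. First I would use the canonical identification of the tangent bundle as the associated bundle $T(G/P)\cong G\times_P(\fg/\frak p)$, under which a class $[g,X+\frak p]$ corresponds to the tangent vector $\tfrac{d}{dt}|_{t=0}\,g\exp(tX)P$; the $P$--equivariance of this map is a routine check using that $bP=P$ for $b\in P$. Choosing a basis of $\fg/\frak p$ represented by elements $Y_1,\dots,Y_n\in\fg_-$ (recall that $\fg_-\to\fg/\frak p$ is a linear isomorphism), the associated normal frame is $s_i(x)=[\si(x),Y_i+\frak p]$, so in the identification above $s_i(x)=\tfrac{d}{dt}|_{t=0}\,\si(x)\exp(tY_i)P$. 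By the explicit description recalled in Section \ref{homog}, one has $\si(\ph(X))=\Ph(X)=\exp(X)$ for $X\in V$, whence $s_i(\ph(X))=\tfrac{d}{dt}|_{t=0}\,\exp(X)\exp(tY_i)P$.

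The key step is then to push this field to the chart via $\ph^{-1}$ and to identify it on $\fg_-$. Since $\fg_-$ is a nilpotent subalgebra, the restriction $\exp|_{\fg_-}$ factors as $\iota\o\exp_{G_-}$, where $\iota\colon G_-\to G$ is the homomorphism integrating the inclusion $\fg_-\into\fg$; equivalently, in terms of the Baker--Campbell--Hausdorff product $\ast$ on $\fg_-$ (which realises the group structure of $G_-$), one has $\exp(X)\exp(tY_i)=\exp(X\ast tY_i)$ with $X\ast tY_i\in\fg_-$. Consequently $\exp(X)\exp(tY_i)P=\ph(X\ast tY_i)$, so $\ph^{-1}\bigl(\exp(X)\exp(tY_i)P\bigr)=X\ast tY_i$ for small $t$. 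Differentiating at $t=0$ shows that the pushed--forward field $(\ph^{-1})_*s_i$ equals, at the point $X$, the velocity $\tfrac{d}{dt}|_{t=0}(X\ast tY_i)$, i.e.\ the velocity of right multiplication by $\exp_{G_-}(tY_i)$ in $G_-$, which is precisely the value at $X$ of the left invariant field $L_{Y_i}$ on $G_-$ read on $\fg_-$ through $\exp_{G_-}$. In other words $(\ph^{-1})_*s_i=(\exp_{G_-})^*L_{Y_i}$. Running this correspondence between bases of $\fg/\frak p$ and left invariant frames of $TG_-$ in both directions gives the asserted equality of the two families of frames.

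Finally, for the \emph{in particular} statement, if $\fg_-$ is abelian then $G_-\cong(\fg_-,+)$ and $\exp_{G_-}$ is the identity, so every left invariant field is constant; its pullback is again constant, and in the linear coordinates determined by the chosen basis of $\fg_-$ these are exactly the coordinate vector fields. I expect the only delicate point to be the careful bookkeeping in the middle paragraph — distinguishing $\exp$ on $\fg_-\subset\fg$ from the abstract $\exp_{G_-}$, and tracking the pushforward under $\ph^{-1}$ — since the underlying computation is otherwise a direct consequence of the fact that $\fg_-$ is a subalgebra.
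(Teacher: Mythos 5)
Your proof is correct and follows essentially the same route as the paper: identify $T(G/P)\cong G\times_P(\fg/\frak p)$, realise the normal frame from a basis of $\fg_-$ via the normal section $\si(\ph(X))=\exp(X)$, and factor $\exp|_{\fg_-}=\iota\circ\exp_{G_-}$ through the homomorphism integrating the inclusion $\fg_-\hookrightarrow\fg$. Your Baker--Campbell--Hausdorff computation simply makes explicit the paper's observation that $T\iota$ carries left invariant vector fields to left invariant vector fields, so there is no gap and no genuinely different idea involved.
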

\begin{proof}
Given a Cartan geometry $(p:\Cal G\to M,\om)$ of type $(G,P)$ an
element $X\in\fg$ and a point $u\in\Cal G$, one immediately sees that
$T_up\cdot \tilde X(u)\in T_{p(u)}M$ depends only on $X+\frak
p\in\frak g/\frak p$. This easily implies that the tangent bundle $TM$
can be identified with the associated bundle $\Cal G\times_P(\fg/\frak
p)$. To obtain a normal frame for $T(G/P)$, we thus have to choose a
basis of $\frak g/\frak p$, choose preimages of the basis elements in
$\fg$, and then project the corresponding left invariant vector fields
to $G/P$. Since $\frak g=\frak g_-\oplus\frak p$, there are unique
preimages contained in $\fg_-$ and conversely, any basis of $\fg_-$
projects onto a basis of $\frak g/\frak p$. Thus we may as well start
from a basis of $\fg_-$.  

Now the inclusion of $\fg_-$ into $\fg$ uniquely lifts to an injective
homomorphism $i:G_-\to G$. In particular, this implies that the tangent
map $Ti$ maps left invariant vector fields to left invariant vector
fields. Since $\Ph=i\o\exp_{G_-}$, we obtain the first result. 

If $\fg_-$ is abelian, then $G_-=\fg_-$ (with the group structure
given by addition) and $\exp_{G_-}=\id$. Moreover, the left invariant
vector fields for this group are just the constant vector fields for
the obvious trivialisation of $T\fg_-$, i.e.~the coordinate vector
fields.
\end{proof}

\subsection{The basic polynomiality result}\label{basic}
We are now ready to prove the basic fact that the coefficients of any
normal solution to a first BGG operator in a normal frame are
polynomials in the corresponding normal coordinates; we also obtain a 
bound on the degree of these polynomials. First, we need a lemma:

\begin{lemma}\label{lem:basic}
  Let $(p:\Cal G\to M,\om)$ be a parabolic geometry of some fixed type
  $(G,P)$ and consider the tractor bundle $\Cal VM\to M$ corresponding
  to a representation $\Bbb V$ of $G$. Fix a point $u_0\in\Cal G$, write
  $x_0=p(u_0)\in M$ and consider the normal section $\si:U\to\Cal G$
  centred at $x_0$ which is determined by $u_0$.
  
  If $s\in\Ga(\Cal VM)$ is parallel for the canonical tractor
  connection, then the function $f:U\to\Bbb V$, which describes $s$ in
  the given normal trivialisation, is given by $f(\ph(X))=\exp(-X)\cdot
  f(x_0)$.
\end{lemma}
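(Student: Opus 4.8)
The plan is to translate everything into the equivariant-function picture on $\Cal G$ and then integrate the tractor connection along the flow lines defining $\Ph$. Recall that the section $s$ corresponds to a $P$--equivariant function $\wh s:p^{-1}(U)\to\Bbb V$ with $\wh s(u\cdot b)=b^{-1}\cdot\wh s(u)$, and that by the discussion preceding the lemma the trivialisation function is $f=\wh s\o\si$. Since $\si(\ph(X))=\Ph(X)$ by construction of the normal section, and $f(x_0)=\wh s(\si(x_0))=\wh s(\Ph(0))=\wh s(u_0)$, proving the claim amounts to showing $\wh s(\Ph(X))=\exp(-X)\cdot\wh s(u_0)$ for all $X$ in the relevant neighbourhood of $0$ in $\fg_-$.

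First I would recall the standard formula for the tractor connection in the equivariant picture: for a vector field $\xi$ on $M$ and any lift $\eta\in\frak X(\Cal G)$ (i.e.\ $Tp\cdot\eta=\xi$), the covariant derivative $\nd_\xi s$ corresponds to the equivariant function $u\mapsto \eta\cdot\wh s(u)+\om(\eta)(u)\cdot\wh s(u)$, where $\om(\eta)(u)\in\fg$ acts on $\Bbb V$ through the infinitesimal $G$--representation. A short computation shows this expression is unchanged if $\eta$ is altered by a vertical field, so one is free to use any pointwise lift; in particular the constant vector field $\tilde X$ is an admissible lift along the curves below, even though it is not $P$--invariant.

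Now fix $X\in\fg_-$ (shrinking the neighbourhood if necessary so that $tX$ lies in it for $t\in[0,1]$) and consider the flow line $c(t):=\Fl^{\tilde X}_t(u_0)=\Ph(tX)$. Since $c'(t)=\tilde X(c(t))$, the field $\tilde X$ along $c$ is a lift of the velocity field of the projected curve $t\mapsto\ph(tX)$, and $\om(\tilde X)(c(t))=X$ for all $t$ by the defining property of the constant vector field. Writing $g(t):=\wh s(c(t))$, parallelity of $s$ along the projected curve together with the connection formula gives $0=g'(t)+X\cdot g(t)$, a linear ODE whose coefficient operator (the action of $X$ on $\Bbb V$) is \emph{constant} in $t$.

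Hence $g(t)=\exp(-tX)\cdot g(0)$ by uniqueness of solutions of a linear ODE, and evaluating at $t=1$ yields $\wh s(\Ph(X))=\exp(-X)\cdot\wh s(u_0)=\exp(-X)\cdot f(x_0)$, which is exactly the assertion. The only point requiring care is the reduction to a constant-coefficient equation: this hinges precisely on the identity $\om(\tilde X)\equiv X$, which is what makes the normal coordinates the ``right'' ones here, and on the lift-independence of the connection formula so that $\tilde X$ may legitimately be used as the lift at each point of $c$. Everything else is the elementary integration of a linear ODE.
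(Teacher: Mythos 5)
Your proof is correct and follows essentially the same route as the paper's own: both pass to the $P$--equivariant function, use the standard formula for the tractor connection (the paper phrases it via the function valued in $L(\frak g/\frak p,\Bbb V)$, you via an arbitrary lift plus lift-independence, which amount to the same thing), and reduce parallelity along the flow line $t\mapsto\Fl^{\tilde X}_t(u_0)$ to the constant-coefficient ODE $g'(t)=-X\cdot g(t)$ solved by $\exp(-tX)$. Your explicit check that the constant vector field $\tilde X$ is an admissible (non-equivariant) lift is a detail the paper leaves implicit, but there is no substantive difference in method.
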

\begin{proof}
  Let $\tilde f:\Cal G\to\Bbb V$ be the $P$--equivariant function
  corresponding to $s$. Denoting the tractor connection by $\nabla^V$,
  the function $\Cal G\to L(\frak g/\frak p,\Bbb V)$ corresponding to
  $\nabla^V s$ is given by $u\mapsto ((X+\frak p)\mapsto
  (\om_u^{-1}(X)\cdot\tilde f)(u)+X\cdot (\tilde f(u)))$. Now consider the
  smooth curve $c(t):=\Fl^{\tilde X}_t(u_0)$, where $\tilde X$ is the
  constant vector field determined by $X\in\fg_-$. Then
  $c'(t)=\om_{c(t)}^{-1}(X)$ for all $t$, so $(\tilde f\o
  c)'(t)=\om_{c(t)}^{-1}(X)\cdot\tilde f$. If the initial section $s$
  is parallel, we conclude that $\tilde f\o c$ satisfies the
  differential equation $(\tilde f\o c)'(t)=-X\cdot (\tilde f\o
  c)(t)$. Since the unique solution of this is $(\tilde f\o
  c)(t)=\exp(-tX)\cdot \tilde f(c(0))$, $c(0)=\si(x_0)$ and for $X$
  close enough to zero we have $c(1)=\si(\ph(X))$ the result follows
  immediately from $f=\tilde f\o\si$.
\end{proof}

Now recall, from Section \ref{2.2}, that given an irreducible
$G$--representation $\Bbb V$ we have the filtration $\{\Bbb V^i\}$ and
the $P$--irreducible quotient $\Bbb H_0$. For a given parabolic
geometry $(p:\Cal G\to M,\om)$ we can then consider normal solutions
of the first BGG operator determined by $\Bbb V$. By definition, these
are the images of those sections of $\Cal VM=\Cal G\times_P\Bbb V$ which are
parallel, for the tractor connection, under the natural bundle map
$\Pi:\Cal VM\to\Cal H_0M$ induced by the quotient projection. Now we
are ready to formulate our basic polynomiality result.

\begin{theorem}\label{thm:basic}
  Let $(p:\Cal G\to M,\om)$ be a parabolic geometry of type $(G,P)$
  and let $\Bbb V$ be a representation of $G$ with natural grading
  $\Bbb V=\Bbb V_0\oplus\dots\oplus\Bbb V_N$, and suppose that
  $\al\in\Ga(\Cal H_0M)$ is a normal solution to the first BGG
  operator determined by $\Bbb V$.
  
  Then for any normal section $\si$, the coefficients of $\al$ in a
  normal frame are polynomials of degree at most $N$ in the normal
  coordinates determined by $\si$.
\end{theorem}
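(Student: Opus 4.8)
The key input is Lemma \ref{lem:basic}, which tells us that the parallel tractor $s\in\Ga(\Cal VM)$ giving rise to the normal solution $\al=\Pi(s)$ is described, in the normal trivialisation determined by $\si$, by the function $f(\ph(X))=\exp(-X)\cdot f(x_0)$ for $X$ in the normal chart domain. The plan is to analyse the $\fg_-$-dependence of this exponential action on the vector space $\Bbb V$ and to show that, modulo the filtration subspace $\Bbb V^1$, it is polynomial of degree at most $N$. The coefficients of $\al$ in a normal frame are exactly the component functions of $\Pi\circ f$ with respect to a basis of $\Bbb H_0=\Bbb V/\Bbb V^1$, so controlling $f\bmod\Bbb V^1$ suffices.

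The first step is to recall that the grading element $E$ acts on $\Bbb V$ with the decomposition $\Bbb V=\Bbb V_0\oplus\dots\oplus\Bbb V_N$, and that $\fg_i\cdot\Bbb V_j\subset\Bbb V_{i+j}$ with $\Bbb V_{i+j}=0$ once the index exceeds $N$. Since $X\in\fg_-=\fg_{-k}\oplus\dots\oplus\fg_{-1}$, the action of $X$ on $\Bbb V$ lowers the grading degree by at least one. Writing the initial value $f(x_0)=v$ and expanding
\begin{equation*}
\exp(-X)\cdot v=\sum_{m\ge 0}\frac{(-1)^m}{m!}\,X^m\cdot v,
\end{equation*}
I would argue that each application of $X$ strictly decreases the $\Bbb V_j$-degree of its argument, so that $X^m\cdot v\in\bigoplus_{j}\Bbb V_{j-m}$ whenever $v$ starts in degree $\ge 0$; in particular $X^m\cdot v$ has a nonzero component in $\Bbb V_0$ only for a bounded range of $m$. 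More precisely, the component of $\exp(-X)\cdot v$ lying in $\Bbb V_j$ receives contributions only from terms $X^m\cdot v_{j'}$ with $j'-(\text{total lowering})=j$, and since the lowering in each factor is at least one and $v_{j'}\in\Bbb V_{j'}$ with $0\le j'\le N$, the number of relevant factors $m$ is at most $N$. Hence the projection of $\exp(-X)\cdot v$ to $\Bbb V_0$ (equivalently, to $\Bbb H_0=\Bbb V/\Bbb V^1$, since $\Bbb V^1=\Bbb V_1\oplus\dots\oplus\Bbb V_N$ and $\Bbb V/\Bbb V^1\cong\Bbb V_0$) is a finite sum of terms $X^m\cdot v$ with $m\le N$, each of which is a homogeneous polynomial of degree $m$ in the components of $X$.

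The second step is to transfer this to coordinates and frames. Choosing a basis of $\fg_-$ gives normal coordinates in which $X$ has linear coordinate functions, so each $X^m\cdot v$ is polynomial of degree $m\le N$ in these coordinates; summing over $m$ yields that the $\Bbb V_0$-component of $f(\ph(X))$ is polynomial of degree at most $N$. By the discussion preceding the theorem, $\al=\Pi(s)$ corresponds in the normal trivialisation to $\Pi\circ f$, and expressing this in a normal frame for $\Cal H_0M$ — which by construction and Lemma \ref{lem:adapted} arises from a basis of $\Bbb H_0\cong\Bbb V_0$ — gives precisely the component functions just computed. This establishes the degree bound $N$.

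I expect the main obstacle to be bookkeeping the filtration/grading degrees carefully enough to make the ``at most $N$ factors'' count rigorous: one must verify that the action of $\fg_-$ lowers degree in the strong sense needed (strictly, by at least one per factor) and that it is genuinely the quotient $\Bbb V/\Bbb V^1$, rather than $\Bbb V_0$ as an invariant subspace, that governs the frame components, since $\Bbb V_0$ is only a $G_0$-piece and the relevant $P$-structure sees $\Bbb H_0=\Bbb V/\Bbb V^1$. The identification $\Pi\leftrightarrow$ projection-to-$\Bbb V_0$ in the normal trivialisation, together with the naturality of normal frames under the quotient map, is what ties these together, and I would make sure that the normal frame for $\Cal H_0M$ is compatible with the normal trivialisation of $\Cal VM$ via Lemma \ref{lem:adapted} before reading off the coefficients.
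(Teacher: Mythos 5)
Your proposal is correct and follows essentially the same route as the paper: Lemma \ref{lem:basic} gives $f(\ph(X))=\exp(-X)\cdot f(x_0)$, the grading property $\fg_i\cdot\Bbb V_j\subset\Bbb V_{i+j}$ truncates the exponential series after $N$ terms (the paper phrases this as $\rho(X)^{N+1}=0$, you phrase it as a component count, but it is the same nilpotency argument), and Lemma \ref{lem:adapted} transfers the resulting degree-$\le N$ polynomial components through $\Pi$ to the normal frame of $\Cal H_0M$. The only cosmetic difference is that the paper records polynomiality of the full tractor section before projecting, while you track only the $\Bbb V/\Bbb V^1$-component throughout.
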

\begin{proof}
  Having given the normal section $\si$, we use normal frames as in
  Lemma \ref{lem:adapted} by choosing an appropriate basis of $\Bbb
  V$. Having chosen this basis, we obtain an identification of $L(\Bbb
  V,\Bbb V)$ with the space of real matrices of appropriate
  size. Likewise, we have to choose a basis of $\fg_-$ in order to
  obtain normal coordinates, and we extend this to a basis of $\frak
  g$.
  
  The representation $\rho $ of $\frak g$ on $\Bbb V$ is a linear map, so for
  $X\in\fg_-$ and $v\in\Bbb V$ the coordinates of $X\cdot v=\rho(X) v$, with
  respect to the chosen basis of $\Bbb V$, are linear expressions in
  the coordinates of $X$ in the chosen basis of $\fg_-$. Now the
  grading $\{\Bbb V_j\}$ of $\Bbb V$ has the property that
  $\fg_i\cdot\Bbb V_j\subset\Bbb V_{i+j}$. In particular, for any
  element $X\in\fg_i$ with $i<0$, the corresponding linear map
  $\rho(X):\Bbb V\to\Bbb V$ has the property that $\rho(X)^{N+1}=0$.

Now let $s\in\Ga(VM)$ be a section which is parallel for the tractor
connection induced by $\om$ and let $f:U\to\Bbb V$ be the function
describing $s$ in the normal trivialisation induced by $\si$. Then from
Lemma \ref{lem:basic} we know that
\begin{equation}\label{poly}
f(\ph(X))=\exp(-X)\cdot v_0=\sum_{k=0}^N\frac{(-1)^k}{k!}\rho(X)^k v_0
\end{equation}
 for an appropriate element $v_0\in\Bbb V$. But this implies that
the components of $s$ in the normal frame determined by $\si$ and the
chosen basis of $\Bbb V$ are polynomials of degree at most $N$ in the
normal coordinates. By construction, the bundle map $\Pi:\Cal VM\to
\Cal H_0M$ maps some elements of the normal frame of $\Cal VM$ to the
normal frame of $\Cal H_0M$ and the remaining elements to zero, so
since $\al=\Pi\o s$, the result follows.
\end{proof}

\begin{remark}
Note that in the proof above we obtain more than is claimed in the
theorem.  The right-hand-side of expression \nn{poly} {\em is} the
polynomial system describing the parallel tractor $s$ in the normal
trivialisation.

Obtaining the explicit presentation of the polynomial system giving
$\alpha$ now simply requires the description of $\Pi$ arising from the
choice of adapted basis for $\mathbb{V}$. This requires an application
of elementary representation theory, the details of which 
depend on $G$, $P$, and $\mathbb{V}$.  For a given parabolic geometry 
(so with $G$, and $P$ fixed)
a practical solution to this,
which we shall take up in section \ref{appl} below, is to
exploit the fact that representations of $G$ can be built up via
tensorial constructions from some basic representations and
correspondingly tractor bundles can be built up from some basic
bundles. 
\end{remark}

Before we discuss examples in detail we continue with a line of
argument that does not depend on knowing the particular parabolic
geometry or details of the representation $\mathbb{V}$, namely the
comparison to the homogeneous model, which is the main tool in the
study of projective structures in \cite{CGH-projective} and of holonomy
reductions of general Cartan geometries in \cite{CGH-holonomy}.

\subsection{Comparison to the homogeneous model}\label{comparison}
Recall the description of the normal section and normal chart
determined by $e\in G$ for the homogeneous model $G\to G/P$ from
section \ref{homog}. From now on, to distinguish this setting from the
general case, we shall write $\underline{\Ph}$ and $\underline{\ph}$
for these maps. Recall that $\underline{\ph}$ is defined on all of
$\fg_-$ and gives a diffeomorphism onto an open subset of $G/P$. Now
we can combine the construction of section \ref{nframes} for a general
geometry of type $(G,P)$ with the same construction for the
homogeneous model.

\begin{definition}\label{def:comp}
Let $(p:\Cal G\to M,\om)$ be a parabolic geometry of type $(G,P)$. The
\textit{normal comparison map} determined by a point $u_0\in\Cal G$
is the composition $\ps:=\underline{\ph}\o\ph^{-1}$, where $\ph^{-1}$
denotes the normal chart determined by $u_0$. 
\end{definition}

Evidently, the normal comparison map defines a diffeomorphism from
the domain $U$ of the normal chart onto an open neighbourhood
$\underline{U}$ of $o=eP$ in $G/P$.

\begin{proposition}\label{prop:comparison}
Let $\Bbb V$ be an irreducible representation of $G$ with
$P$--irreducible quotient $\Bbb H_0$. Let $(p:\Cal G\to M,\om)$ be a
parabolic geometry of type $(G,P)$ and for a point $u_0\in\Cal G$
consider the normal comparison map $\ps:U\to\underline{U}$.

If $\al\in\Ga(\Cal H_0M)$ is a normal solution of the first BGG
operator determined by $\Bbb V$, then the coordinate functions of
$\al$ with respect to a normal frame of $\Cal H_0M$ are the pullbacks
along $\ps$ of the coordinate functions of a unique corresponding
solution of the first BGG operator determined by $\Bbb V$ in the
corresponding normal frame of $\Cal H_0(G/P)$.
\end{proposition}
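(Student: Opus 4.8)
The plan is to exploit the fact that the description of a parallel tractor in normal coordinates furnished by Lemma \ref{lem:basic} is \emph{universal}: it depends only on the representation $\Bbb V$ and on the value of the section at $u_0$, and not on the curvature of the particular geometry. Concretely, by Definition \ref{def:norm-sol} I may write $\al=\Pi(s)$ for a parallel $s\in\Ga(\Cal VM)$; setting $v_0:=\tilde f(u_0)\in\Bbb V$ for the value at $u_0$ of the equivariant function $\tilde f$ representing $s$, Lemma \ref{lem:basic} together with \eqref{poly} gives
\[ f(\ph(X))=\exp(-X)\cdot v_0,\qquad X\in V\subset\fg_-. \]
I would then carry out the identical construction on the homogeneous model and compare.

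First I would produce the corresponding solution on $G/P$. By Proposition \ref{prop:firstbgg}(4) the parallel sections of $\Cal V(G/P)$ are in bijection with $\Bbb V$ and are carried by $\Pi$ bijectively onto the first BGG solutions; let $\underline s$ be the unique parallel section whose equivariant function takes the value $v_0$ at $e\in G$, and put $\underline\al:=\Pi(\underline s)$. Applying Lemma \ref{lem:basic} on the model, with distinguished point $e$ and chart $\underline\ph$ (so that the normal section at $o=eP$ is the identity), yields
\[ \underline f(\underline\ph(X))=\exp(-X)\cdot v_0, \]
which is the \emph{same} $\Bbb V$--valued function of $X$ as above, precisely because the initial value is the same $v_0$.

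Next I would match frames and apply $\Pi$. Choosing on both sides the single adapted basis of $\Bbb V$ used in Lemma \ref{lem:adapted} and in the proof of Theorem \ref{thm:basic}, it descends to one and the same basis of $\Bbb H_0=\Bbb V/\Bbb V^1$; hence the normal frame of $\Cal H_0M$ and the corresponding normal frame of $\Cal H_0(G/P)$ are built from this common basis, and in the normal trivialisations $\Pi$ is literally the same coordinate projection on both sides. Consequently the coordinate functions of $\al$ and of $\underline\al$, read off in the normal coordinate $X$, are the very same functions $P_a(X)$ obtained from the $\Bbb H_0$--components of $\exp(-X)\cdot v_0$. Since $\ps=\underline\ph\o\ph^{-1}$ obeys $\ps(\ph(X))=\underline\ph(X)$, I then get $\al^a(\ph(X))=P_a(X)=\underline\al^a(\underline\ph(X))=\underline\al^a(\ps(\ph(X)))$, i.e.\ the coordinate functions of $\al$ are the pullbacks $\ps^*\underline\al^a$. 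Uniqueness of $\underline\al$ is automatic: $v_0$ is determined by $\al$ and $u_0$ through the injectivity of $\Pi$ on parallel sections (Proposition \ref{prop:firstbgg}(3)), and $v_0\mapsto\underline\al$ is the bijection of part (4).

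The one genuinely substantive point---the step I would take pains to isolate---is the universality invoked at the outset: the geometry-dependence vanishes in Lemma \ref{lem:basic} because parallel transport along the single constant field $\tilde X$ collapses to the linear ODE $(\tilde f\o c)'=-\rho(X)(\tilde f\o c)$, which involves neither the curvature nor any feature of $M$ beyond $v_0$. Granting this, the proposition is essentially bookkeeping, since the comparison map $\ps$ is simply the identity read through the two normal charts; the only remaining care is to force a common adapted basis of $\Bbb V$ so that the two frames, and hence the projections $\Pi$, genuinely correspond.
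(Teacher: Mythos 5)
Your proposal is correct and follows essentially the same route as the paper's proof: apply Lemma \ref{lem:basic} on both the curved geometry and the model with the common initial value $v_0=f(x_0)=\tilde f(u_0)$, use a single adapted basis of $\Bbb V$ (Lemma \ref{lem:adapted}) so that $\Pi$ acts identically in both normal trivialisations, and obtain uniqueness from Proposition \ref{prop:firstbgg}(3),(4). The only difference is cosmetic—you cite part (4) of Proposition \ref{prop:firstbgg} where the paper invokes the canonical flat trivialisation of tractor bundles on $G/P$ directly, which amounts to the same thing.
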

\begin{proof}
It clearly suffices to prove this for one choice of normal frames, so
we may use normal frames as in Lemma \ref{lem:adapted}. Thus the
normal frames for the $\Cal H_0$--bundles are just the projections of
some elements of normal frames of corresponding the tractor
bundles. By definition, there is a parallel section $s\in\Ga(VM)$ such
that $\al=\Pi(s)$. From Lemma \ref{lem:basic} we know that the
function $f:U\to\Bbb V$ corresponding to $s$ is given by
$f(\ph(X))=\exp(-X)\cdot f(x_0)$.  Now on the homogeneous model, any
tractor bundle is canonically trivial and the tractor connection is
the flat connection induced by this trivialisation. (Thus on the
homogeneous model all first BGG solutions are normal.)  In particular,
there is a unique parallel section $\underline{s}\in\Ga(\Cal V(G/P))$
such that the corresponding function $\underline{U}\to\Bbb V$ maps $o$
to $f(x_0)$. Again by Lemma \ref{lem:basic}, this function must map
$\underline{\ph}(X)$ to $\exp(-X)\cdot f(x_0)$. This means that the
component functions of $s$ and $\underline{s}$ in any compatible pair
of normal frames are intertwined by $\ps$, and since $\underline{s}$
projects to a solution of the first BGG operator determined by $\Bbb
V$, and conversely this solution determines $\underline{s}$, the
result follows.
\end{proof}

Of course, this result implies that the comparison map
$\ps:U\to\underline{U}$ restricts to a bijection between the zero sets
of the two normal solutions in question. In particular, the zero sets
of normal solutions on curved geometries locally look like zero sets
of solutions on the model. The latter can often be nicely analysed
using algebraic geometry (or even linear algebra),
c.f.~\cite{CGH-projective}; Proposition \ref{prop:comparison} shows
that the {\em same} polynomial systems apply in the curved setting.

To get stronger consequences from this line of argument, one has to
analyse the properties of the comparison maps for individual
structures in more detail. Of course, the comparison map cannot be a
morphism of parabolic geometries unless $(p:\Cal G\to M,\om)$ is
locally flat. (It is a local isomorphism in the locally flat case.)
However, by construction it is compatible with the projections of flow
lines of constant vector fields through $x_0$ respectively through
$o$. Each of these flow lines is a distinguished curve of the geometry
in the sense of Section 5.3 of \cite{cap-slovak-book}, so one gets compatibility
with some canonical curves through $x_0$.  However, not all canonical
curves through $x_0$ are obtained in that way (since the point $u_0$
remains fixed). For most geometries, at least some of the canonical
curves (e.g.~null geodesics in pseudo--Riemannian conformal structures
and chains in CR structures) are uniquely determined by their initial
direction up to parametrisation so one can get nice information on
those. This works particularly well in the situation of projective
structures as discussed in \cite{CGH-projective}. There all canonical curves
are uniquely determined by their initial direction up to
parametrisation, so one simply gets compatibility of $\ps$ with the
unparametrised canonical curves through $x_0$, which is heavily used
in \cite{CGH-projective}.

\subsection{Remark on $G$--types and $P$--types}\label{types}
We only touch on this topic briefly, because it is discussed in detail
in our article \cite{CGH-holonomy} in the more general context of holonomy
reductions of parabolic geometries and in \cite{CGH-projective} for projective
structures.  Consider a representation $\Bbb V$ of $G$, the
corresponding tractor bundle $\Cal VM$ for a parabolic geometry
$(p:\Cal G\to M,\om)$ of type $(G,P)$ and a section $s\in \Ga(\Cal
VM)$. Then for a given point $x\in M$, the image of the fibre $\Cal
G_x$ under the equivariant function corresponding to $s$ is a
$P$--orbit in $\Bbb V$ that we term the \textit{$P$--type of $s$ at
  $x$}. For a given normal solution $\alpha$, it is obvious from
Proposition \ref{prop:firstbgg} that any point in the zero locus of
$\alpha$ lies in a different $P$-type to a point where $\alpha$ is
non-zero. In fact much more information is available and in both the
articles mentioned examples related to normal solutions are
discussed.

It is shown in  \cite{CGH-projective,CGH-holonomy} that in the case of a parallel
section $s$ and a connected base $M$, the $P$--types of all points are
contained in a single $G$--orbit $\Cal O\subset\Bbb V$, which is
called the $G$--type of $s$. If $\Cal O=\cup_i\Cal O_i$ is the
decomposition of the $G$--orbit into $P$--orbits, there is an induced
decomposition $M=\cup_i M_i$ according to $P$--type. In
\cite{CGH-holonomy} it is proved that each of the $M_i$ is an initial
submanifold in $M$. For the homogeneous model, this decomposition is
the decomposition of $G/P$ into orbits under the action of the
isotropy group $H$ of some chosen element of $\Cal O$. Thus, the
general decomposition is called a \textit{curved orbit
  decomposition}. If $\al$ is the normal solution of the first BGG
operator determined by $s$, then the zero--set of $\al$ is stratified
into a union of $P$--types.

Finally, the $H$--orbits in $G/P$ are all homogeneous spaces and thus
homogeneous models of Cartan geometries. In the curved case, one
obtains Cartan geometries of the same types over the individual curved
orbits. The curvatures of these induced Cartan geometries are related
in a precise way (that we describe) to the curvature of the original
geometry of type $(G,P)$.

\section{Examples and applications}\label{appl}
In this section, we show how to apply the general principles and
results obtained above to specific structures. We mainly exploit the
fact that the representations of a classical Lie group $G$ (and hence
the corresponding tractor bundles on parabolic geometries of type
$(G,P)$ for any parabolic subgroup $P$ of $G$) can be built from the
standard representation (and spin representations in the orthogonal
case). This can be used to explicitly describe potential parallel
sections of tractor bundles as well as their projections to the
canonical quotients. The details of this of course strongly depend on
the choices of $G$ and $P$, but the method is quite universal.

\subsection{Normal frames and generalised homogeneous coordinates 
for oriented projective structures}\label{projective}

Here we put $G=SL(n+1,\Bbb R)$, identified with its defining
representation, and $P\subset G$ the stabiliser of the ray spanned by
the first vector in the standard basis, which corresponds to oriented
projective structures. Let us start by showing how to recover the
generalised homogeneous coordinates from \cite{CGH-projective} in our
setting.

The Lie algebra $\frak g=\frak{sl}(n+1,\Bbb R)$ consists of matrices
of the form $\left(\begin{smallmatrix} -\tr(A) & Z \\ X & A
\end{smallmatrix}\right)$ with $X\in\Bbb R^n$, $Z\in\Bbb R^{n*}$ and
$A\in\frak{gl}(n,\Bbb R)$, and $X$, $A$, and $Z$ represent the grading
components $\fg_{-1}$, $\fg_0$, and $\fg_1$, respectively. We start
the construction of normal frames with the \textit{standard tractor
  bundle} $\Cal TM$, which corresponds to the standard representation
$\Bbb R^{n+1}$ of $G$. Let us denote the standard basis of $\Bbb
R^{n+1}$ by $e_0,\dots,e_n$, so $P$ stabilises the line $\Bbb R\cdot
e_0$, and the quotient $\Bbb R^{n+1}/\Bbb R\cdot e_0$ is an
irreducible representation of $P$. In the standard notation for
projective structures the bundle induced by $\Bbb R\cdot e_0$ is a
density bundle usually denoted by $\Cal E(-1)$, while $\Bbb
R^{n+1}/\Bbb R\cdot e_0$ induces the bundle $TM\otimes\Cal E(-1)$. We
will denote the latter bundle by $TM(-1)$ and in general use the
convention that adding ``$(w)$'' to the name of a bundle indicates a
tensor product with the density bundle $\Cal E(w)$.

Given a projective structure, the corresponding parabolic geometry
$(p:\Cal G\to M,\om)$ of type $(G,P)$, and a normal chart $U$, we
consider the corresponding normal frames. The normal frame for the
density bundle $\Cal E(-1)$ is a specific non--vanishing section
determined by the basis vector $e_0$. It will be better, however, to
take the corresponding non--vanishing section $X^0$ of $\Cal E(1)=\Cal
E(-1)^*$ as the basic object, so then $e_0$ corresponds to the density
$(X^0)^{-1}$. The basis vectors $\{e_1,\dots,e_n\}$ then can be used
to complete this to the adapted normal frame of the standard tractor
bundle $\Cal TM$, and projecting them to the quotient, one obtains the
normal frame for $TM(-1)$, which we denote by $\{\xi_1,\dots,\xi_n\}$.

Let us use the obvious identification of $\fg_{-1}$ with $\Bbb R^n$ to
define coordinates and denote the corresponding normal coordinates on
$M$ by $x_1,\dots,x_n\in C^\infty(M,\Bbb R)$. Then for $i=1,\dots,n$
we define $X^i\in\Ga(\Cal E(1))$ by $X^i:=x_iX^0$.

\begin{proposition}\label{prop:proj1}
  The densities $X^0,\dots,X^n\in\Ga(\Cal E(1))$ are exactly the
  (local) generalised homogeneous coordinates on $M$ introduced in
  \cite{CGH-projective}.
\end{proposition}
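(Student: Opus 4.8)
The plan is to show that the densities $X^0,\dots,X^n$ constructed here agree with the generalised homogeneous coordinates of [CGH-projective]. Since those coordinates are defined via the parallel standard tractors on the projectively flat model and their components, the natural strategy is to apply Theorem \ref{thm:basic} (equivalently formula \nn{poly}) to the standard tractor bundle and read off the component functions explicitly.

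Let me sketch the main steps.

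\begin{proof}[Proof proposal]
The strategy is to make the general formula \nn{poly} completely explicit in the case at hand and match it with the definition of generalised homogeneous coordinates in \cite{CGH-projective}. First I would recall that $\Cal TM=\Cal G\times_P\Bbb R^{n+1}$, that the chosen adapted normal frame comes from the standard basis $e_0,\dots,e_n$, and that by construction $X^0$ is the section of $\Cal E(1)$ dual to the frame element coming from $e_0$. The generalised homogeneous coordinates of \cite{CGH-projective} are, by their definition there, precisely the components with respect to such a frame of the relevant parallel (or ``canonical'') standard tractors, expressed in normal coordinates; so it suffices to verify that the $X^i$ defined here reproduce exactly those component functions.

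The key computation is to evaluate \nn{poly} for $\Bbb V=\Bbb R^{n+1}$. For $X\in\fg_{-1}$ identified with a vector in $\Bbb R^n$ via the block form $\left(\begin{smallmatrix} -\tr(A) & Z \\ X & A\end{smallmatrix}\right)$, the matrix $\rho(X)$ sends $e_0$ to $\sum_i X^i e_i$ and annihilates each $e_i$ with $i\geq 1$, so that $\rho(X)^2=0$ on $\Bbb R^{n+1}$. Hence $\exp(-X)\cdot v_0=(\id-\rho(X))v_0$, which is linear, matching the grading bound $N=1$ for the standard representation. Writing out $\exp(-X)\cdot e_0=e_0-\sum_i x_i e_i$ in the normal coordinates $x_1,\dots,x_n$ associated to the basis of $\fg_{-1}=\Bbb R^n$, I would read off that the coefficient of $e_0$ is the constant $1$ and the coefficient of $e_i$ is $-x_i$. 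Dualising (passing from $e_0$ to the section $(X^0)^{-1}$ of $\Cal E(-1)$, i.e.\ to $X^0\in\Ga(\Cal E(1))$), these coefficient functions become precisely $X^0$ and $x_i X^0=X^i$. This identifies the $X^i$ built here with the component functions of the standard tractors that \cite{CGH-projective} uses to define generalised homogeneous coordinates.

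The main obstacle I anticipate is purely bookkeeping rather than conceptual: one must be careful about the distinction between the tractor component functions and their densitised reformulation via $X^0$, and about the sign and normalisation conventions relating $e_0$, the density $(X^0)^{-1}$, and the normal coordinates $x_i$, so that the identification with the specific objects of \cite{CGH-projective} is exact and not merely up to rescaling or reparametrisation. Once the explicit form of $\exp(-X)$ on $\Bbb R^{n+1}$ is pinned down and matched against the defining formulae in \cite{CGH-projective}, the proposition follows directly from Theorem \ref{thm:basic} together with Proposition \ref{prop:homog-frame} (which guarantees that in the projective, abelian-$\fg_{-1}$ case the normal frame for $TM(-1)$ is the expected coordinate frame).
\end{proof}
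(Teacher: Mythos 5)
Your core matrix computation is fine and is essentially the same one the paper performs on the model: for $X\in\fg_{-1}$ one has $\rho(X)^2=0$ on $\Bbb R^{n+1}$, so the exponential acts affinely, $\exp(X)(e_0)=e_0+\sum_i x_ie_i$, and dualising produces the coefficient functions $1$ and $x_i$ against the density $X^0$. The gap is in the step where you identify these with the objects of \cite{CGH-projective}. You assert that the generalised homogeneous coordinates are ``by their definition there, precisely the components with respect to such a frame of the relevant parallel standard tractors, expressed in normal coordinates.'' That is not their definition (it could not be: normal frames and normal trivialisations are introduced only in the present paper), and it cannot serve as a definition in the curved case, since a general projective structure admits no nonzero parallel standard (co)tractors at all, whereas both the densities $X^i=x_iX^0$ constructed here and the coordinates of \cite{CGH-projective} exist on every projective manifold. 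Consequently Theorem \ref{thm:basic} and formula \nn{poly}, which describe parallel tractors, give you nothing to match against on a curved $M$; as written, your argument only treats the locally flat situation, while the proposition concerns an arbitrary projective structure.

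What is missing is the bridge between the curved case and the model, and this is exactly where the paper's proof does its work. The paper observes that, by construction, the normal comparison map $\ps=\underline{\ph}\o\ph^{-1}$ of Definition \ref{def:comp} intertwines the normal coordinates $x_i$, and hence the densities $X^i=x_iX^0$, on $U\subset M$ with the corresponding objects on the model, and that the construction of \cite{CGH-projective} is likewise compatible with this comparison to the model; this reduces the whole statement to $G/P$. Only there does one perform the computation you did: the normal chart of the model is $X\mapsto\exp(X)(e_0)=e_0+\sum_ix_ie_i$, so the $X^i$ are visibly the restrictions of the standard linear coordinates on $\Bbb R^{n+1}\setminus\{0\}$, which is how \cite{CGH-projective} defines homogeneous coordinates on the model. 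To repair your proof you would either insert this comparison-map reduction, or reinterpret your $\exp(-X)$ formula as describing parallel transport along the radial flow lines through $x_0$ (which does exist in the curved case, by the ODE argument in the proof of Lemma \ref{lem:basic}) and then verify that this matches the actual construction in \cite{CGH-projective}; in either case the identification step must be proved rather than asserted.
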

\begin{proof}
  By its  construction, it is clear that the comparison map to the
  homogeneous model is compatible with the normal coordinates $x_i$ and
  thus also with the densities $X^i$. Comparing with the construction in \cite{CGH-projective}, it thus suffices to show that
  on the homogeneous model the $X^i$ are obtained from the standard
  coordinates on $\Bbb R^{n+1}\setminus\{0\}$. The normal chart in this
  case simply is given by $X\mapsto \exp(X)(e_0)$, and from the
  presentation of $\fg$ above it is obvious that this is given by
  $e_0+\sum_{i=1}^n x_ie_i$, where the $x_i$ are the components of
  $X$. This immediately implies the claim.
\end{proof}

\subsection{Normal solutions for projective structures}\label{proj2}

To proceed further, we need some information on the first BGG
operators on projective structures, which is taken from \cite{BCEG}.
An irreducible representation $\Bbb V$ of $G$ and hence the
corresponding tractor bundle can be determined by its irreducible
quotient $\Bbb H_0$ (viewed as a representation of the semisimple part
of $G_0$) and a single non--negative integer $k$. In this case, we
will say that $\Bbb V$ corresponds to $(\Bbb H_0,k)$. The first BGG
operator is then defined on the bundle $\Cal H_0M(w)$ (induced by
$\Bbb H_0$), for an appropriate value of $w$, and has order $k$. The
range of the operator lies in the bundle induced by the
$P$--representation $S^k\Bbb R^{n*}\odot\Bbb H_0(w')$ for appropriate
$w'$. Here $\odot$ denotes the \textit{Cartan product}, i.e.~the
irreducible component of maximal highest weight in $S^k\Bbb
R^{n*}\otimes\Bbb H_0$. Moreover, if $\Bbb V$ is the representation of
$G$ corresponding to $(\Bbb H_0,1)$, then the $G$--representation
corresponding to $(\Bbb H_0,k)$ is $S^{k-1}\Bbb R^{(n+1)*}\odot\Bbb V$
for any $k\geq 2$.

The basic building blocks of representations of $G$ are the
fundamental representations $\La^r\Bbb R^{n+1}$ for $r=1,\dots,n$ (of
course $\La^n\Bbb R^{n+1}\cong \Bbb R^{(n+1)*}$). The description of
the composition series of $\La^r\Bbb R^{n+1}$ follows readily from the
one of $\Bbb R^{n+1}$. There is a $P$--invariant subspace isomorphic
to $\La^{r-1}\Bbb R^n$ (spanned by the wedge products of basis
elements which involve $e_0$) and the quotient by this is the
irreducible representation $\La^r\Bbb R^n$. In the above description
of irreducible representations of $G$, $\La^r\Bbb R^{n+1}$ corresponds
to $(\La^r\Bbb R^n,1)$ for $r<n$ and $(\Bbb R,2)$ for $r=n$. Hence the
corresponding first BGG operator has order one for $r<n$ and order two
for $r=n$. On the level of associated bundles, the invariant subspace
corresponds to $\La^{r-1}TM(-r)$, while the quotient corresponds to
$\La^rTM(-r)$ and $\La^nTM(-n)\cong\Cal E(1)$.

\begin{theorem}\label{thm:proj2}
  Consider a projective structure on a smooth manifold $M$ of
  dimension $n$, consider a normal chart $U$ on $M$, let
  $x_1,\dots,x_n$ be the corresponding normal coordinates, and let
  $\{\xi_1,\dots,\xi_n\}$ be the normal frame of $TM(-1)$ determined
  by $U$. Then we have:
  
  (1) For $r<n$, any normal solution of the first BGG operator defined
  on $\La^rTM(-r)$ restricts on $U$ to a linear combination with
  constant coefficients of the following sections
$$
\begin{cases}
  \xi_{i_1}\wedge\dots\wedge\xi_{i_r} \quad\text{~for~}1\leq
  i_1<i_2<\dots<i_r\leq n\\
  \sum_jx_j\xi_j\wedge\xi_{i_1}\wedge\dots\wedge\xi_{i_{r-1}}
  \quad\text{~for~}1\leq i_1<i_2<\dots<i_{r-1}\leq n
\end{cases}
$$ 
In the case of the homogeneous model $S^n$, $\xi^i$ is the coordinate
frame associated to a choice of basis of $\fg_-$, each of the sections
listed above is the restriction of a solution, and they form a basis
for the space of solutions of the first BGG operator on $U$.
  
(2) For $k\geq 2$, any normal solution of the ($k$th order) first BGG
operator defined on $\Cal E(k-1)$ is a homogeneous polynomial of degree
$k-1$ in the generalised homogeneous coordinates $\{X^0,\dots,X^n\}$
determined by $U$. In the case of $S^n$, all such polynomials arise as
solutions.

(3) For $k\geq 2$, any normal solution of the ($k$th order) first BGG
operator acting on $\La^rTM(k-r-1)$ can be written as a linear
combination of the sections listed in (1) with coefficients which are
homogeneous polynomials of degree $k-1$ in the generalised homogeneous
coordinates $\{X^0,\dots,X^n\}$.
\end{theorem}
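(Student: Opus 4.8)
The plan is to extract all three statements from the explicit form of a parallel tractor given by Lemma~\ref{lem:basic}, namely $f(\ph(X))=\exp(-X)\cdot v_0$, together with the polynomial expansion \eqref{poly} used in the proof of Theorem~\ref{thm:basic}. Here $\fg=\fg_{-1}\oplus\fg_0\oplus\fg_1$ is $|1|$--graded, so $\fg_-=\fg_{-1}$ is abelian, the normal frame is the coordinate frame (Proposition~\ref{prop:homog-frame}), and $\exp(-X)=\id-\rho(X)+\tfrac12\rho(X)^2-\dots$ is the exponentiated action of the nilpotent block $\left(\begin{smallmatrix}0&0\\X&0\end{smallmatrix}\right)$, i.e.\ $X\cdot e_0=\sum_j x_je_j$ and $X\cdot e_i=0$ for $i\geq1$. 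In each case I would (i) identify the inducing $G$--representation $\Bbb V$ and its grading $\Bbb V=\Bbb V_0\oplus\dots\oplus\Bbb V_N$, (ii) compute $\exp(-X)v_0$, and (iii) apply $\Pi$, which extracts the $\Bbb V_0$--component, re-expressing the result in the frame $\{\xi_I\}$ and the homogeneous coordinates $X^i=x_iX^0$ of Proposition~\ref{prop:proj1}.

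For (1), $\Bbb V=\La^r\Bbb R^{n+1}$, with grading $\Bbb V_0=\La^r\Bbb R^n$ and $\Bbb V_1=e_0\wedge\La^{r-1}\Bbb R^n$, so $N=1$ and $\exp(-X)v_0=v_0-X\cdot v_0$. Writing $v_0=\alpha+e_0\wedge\beta$ and using the derivation property of $X$, one gets $X\cdot v_0=(\sum_j x_je_j)\wedge\beta$; since $\Pi$ annihilates $e_0\wedge\La^{r-1}\Bbb R^n$, this gives $\Pi(\exp(-X)v_0)=\alpha-(\sum_j x_je_j)\wedge\beta$, which in the normal frame is exactly a constant--coefficient combination of the two displayed families. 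On $S^n$ all solutions are normal and the solution space is isomorphic to $\Bbb V$ by Proposition~\ref{prop:firstbgg}(4); the two families comprise $\binom{n}{r}+\binom{n}{r-1}=\binom{n+1}{r}=\dim\Bbb V$ sections, arising from a basis of $\Bbb V$ (the choices $\beta=0$, resp.\ $\alpha=0$), hence form a basis of the solution space.

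For (2), $\Bbb V=S^{k-1}\Bbb R^{(n+1)*}$, the homogeneous polynomials of degree $k-1$ in the standard coordinates $Y_0,\dots,Y_n$ on $\Bbb R^{n+1}$; its quotient $\Bbb H_0=\Bbb R\cdot Y_0^{k-1}$ induces $\Cal E(k-1)$, and the grading runs up to $N=k-1$. The contragredient action yields $\exp(-X)\cdot P=P(Y_0,Y_1+x_1Y_0,\dots,Y_n+x_nY_0)$, and projecting to $\Bbb V_0$ means setting $Y_i=0$ for $i\geq1$, which by homogeneity returns $P(1,x_1,\dots,x_n)\,Y_0^{k-1}$. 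Since $Y_0^{k-1}$ is the normal frame $(X^0)^{k-1}$ and $x_i=X^i/X^0$, re-homogenising gives $\alpha=P(X^0,\dots,X^n)$; on $S^n$ every $v_0\in\Bbb V$ occurs, so every such polynomial arises.

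For (3), $\Bbb V=S^{k-1}\Bbb R^{(n+1)*}\odot\La^r\Bbb R^{n+1}$ sits $G$--equivariantly inside $\bW:=S^{k-1}\Bbb R^{(n+1)*}\otimes\La^r\Bbb R^{n+1}$, and this inclusion induces a parallel embedding of tractor bundles compatible with $\Pi$. I would therefore compute $\exp(-X)v_0$ inside $\bW$, where the action factorises as $\exp(-X)\otimes\exp(-X)$, and use that the projection to the bottom graded piece of a tensor product factorises, $\Pi^{\bW}=\Pi_{(2)}\otimes\Pi_{(1)}$, because a summand of total degree zero must have degree zero in each factor. Writing $v_0=\sum_\mu P_\mu\otimes\omega_\mu$ and inserting the results of (1) and (2) then gives $\alpha=\sum_\mu P_\mu(X^0,\dots,X^n)\,\Pi_{(1)}(\exp(-X)\omega_\mu)$, precisely a combination of the sections of (1) with homogeneous degree $k-1$ coefficients in the $X^i$. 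The main obstacle is this last step: justifying the passage from the Cartan product to the ambient tensor product and the factorisation of $\Pi$, which is what lets one avoid an explicit handle on the Cartan product; this suffices because (3) asserts only that a solution \emph{can} be so written. (Alternatively, (2) and (3) follow from their $S^n$ versions via the comparison map of Proposition~\ref{prop:comparison}, as the $X^i$ are compatible with it.)
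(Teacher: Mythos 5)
Your proposal is correct and follows essentially the same route as the paper: all three parts are reduced, via Lemma \ref{lem:basic}, to explicitly computing $\exp(-X)\cdot v_0$ and projecting to the irreducible quotient, with part (3) handled—exactly as in the paper—by embedding the Cartan product into the full tensor product $S^{k-1}\Bbb R^{(n+1)*}\otimes\La^r\Bbb R^{n+1}$ and using that the projection of a tensor product is the tensor product of the projections of the factors. The only cosmetic difference is in (2), where the paper builds a cotractor frame from the $r=n$ case of (1) via $\La^nTM(-n)\cong\Cal E(1)$ and then passes to symmetric powers, whereas you compute the contragredient action on $S^{k-1}\Bbb R^{(n+1)*}$ directly as polynomials; the content is identical.
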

\begin{proof}
  (1) For a matrix $X$ contained in the subspace $\fg_{-1}$ of
  matrices from \ref{projective}, one simply has $\exp(-X)=\Bbb I-X$,
  where $\Bbb I$ denotes the unit matrix. The action on the standard
  representation $\Bbb R^{n+1}$ is thus given by
  $\exp(-X)(e_0)=e_0-\sum_{j=1}^nx_je_j$ and $\exp(-X)(e_i)=e_i$ for
  $i>0$, where the $x_j$ are the components of $X$. Denoting by
  $\{s_0,\dots,s_n\}$ the normal frame determined by $U$ and the basis
  $\{e_0,\dots,e_n\}$ of $\Bbb R^{n+1}$, Lemma \ref{lem:basic} shows
  that any parallel section of the standard tractor bundle $\Cal TM$
  must be given by a linear combination with constant coefficients of
  the sections $s_0-\sum_{j=1}^nx_js_j$ and $s_1,\dots,s_n$. The
  projection to the quotient bundle $TM(-1)$ maps $s_0$ to zero and
  $s_i$ to $\xi_i$ for $i=1,\dots,n$ (compare with Lemma
  \ref{lem:adapted}). Hence any normal solution of the first BGG
  operator on this quotient bundle must be a linear combination with
  constant coefficients of $\xi_0:=\sum_{j=1}^nx_j\xi_j$ and
  $\xi_1,\dots,\xi_n$, which is our claim for $r=1$.

  Now the wedge products of the elements of the standard basis of
  $\Bbb R^{n+1}$ form a basis for $\La^r\Bbb R^{n+1}$, and we conclude
  that any parallel section of $\La^r\Cal TM$ must then be a linear
  combination with constant coefficients of the wedge products of the
  sections $s_0,\dots,s_n$. But the projection of a wedge product of
  sections is just the wedge product of the projections of the
  individual sections. Expanding this for the wedge products involving
  $\xi_0$, we obtain the claimed list of sections. In the case of the
  homogeneous model, each of the sections $s_i$ and thus any wedge
  product of these sections actually is parallel, which together with
  Proposition \ref{prop:homog-frame} implies the last part of (1).

(2) Let us look at the result of (1) in the case $r=n$. Then we get
the sections $\xi_1\wedge\dots\wedge\xi_n$ and (up to a sign that we
may ignore) $x_i\xi_1\wedge\dots\wedge\xi_n$ for $i=1,\dots,n$. The
quotient bundle $\La^nTM(-n)$ is a line bundle which is trivialised
over $U$ by the section $\xi_1\wedge\dots\wedge\xi_n$. Compatibility of
normal sections with constructions on the inducing representations
shows that this section has to be a normal frame for this
bundle. Since $\La^n\Bbb R^{n+1}\cong\Bbb R^{(n+1)*}$, we must have
$\La^nTM(-n)\cong\Cal E(1)$ with the isomorphism mapping
$\xi_1\wedge\dots\wedge\xi_n$ to $X^0$. Hence we conclude that any
normal solution of the first BGG operator on $\Cal E(1)$ is a linear
combination with constant coefficients of the generalised homogeneous
coordinates, which is our claim for $k=2$. 

What we have actually done here was construct a frame for the
cotractor bundle $\Cal T^*M$, over $U$, such that any parallel section
of $\Cal T^*M$ must be a linear combination with constant coefficients
of the frame elements and such that the projections of the frame
elements to the quotient bundle $\Cal E(1)$ are exactly the
generalised homogeneous coordinates $X^i$. Now we immediately conclude
that the symmetric power $S^{k-1}\Cal T^*M$ has $\Cal E(k-1)\cong
S^{k-1}\Cal E(1)$ as its natural quotient. The symmetric products of
$k-1$ elements of our frame for $\Cal T^*M$ form a frame for $S^{k-1}\Cal
T^*M$ such that any parallel section must be a linear combination of
the frame elements with constant coefficients. Projecting a symmetric
product of sections to $\Cal E(k-1)$, one of course obtains the
product of the projections of the individual sections. This completes
the proof of (2).

(3) This now immediately follows since from (1) and (2) we can form a
frame of $S^{k-1}\Cal T^*M\otimes\La^r\Cal TM$ such that any parallel
section of this bundle must be a linear combination with constant
coefficients of the frame elements. As we have noticed above, the
first BGG operator defined on $\La^rTM(k-r-1)$ comes from the
subbundle corresponding to $S^{k-1}\Bbb R^{(n+1)*}\odot \La^r\Bbb
R^{n+1}$. Of course, we can construct a frame of this subbundle which
consists of linear combinations of tensor products of elements of the
frames of the two factors as constructed in (1) and (2). The
projection of such a tensor product is again the tensor product of the
projections of the factors, which implies the result.
\end{proof}

\subsection{A more involved example for projective
    structures}\label{proj3} 

  The result in part (3) of Theorem \ref{thm:proj2} is not optimal,
  since only certain linear combinations of the elements of the frame
  of $S^{k-1}\Cal T^*M\otimes\La^r\Cal TM$ constructed in the proof
  will actually lie in $S^{k-1}\Cal T^*M\odot\La^r\Cal TM$.  Going
  into more details on the decomposition of tensor products, one can
  improve the result. Using similar considerations, one can extend the
  results of part (1) to operators defined on more complicated
  bundles, and we discuss an example of this. We want to describe
  solutions of the first BGG operators on the bundle $S^2T^*M(4)$,
  which is of order one. To formulate the result, it will be better to
  first recast the result of Theorem \ref{thm:proj2} for $r=n-1$ in
  terms of the bundle $T^*M(2)$. We have chosen this example since
  both these results are of interest for Riemannian geometry, see
  \ref{Riemann} below.

  We can identify $\La^{n-1}\Bbb R^{n+1}$ with $\La^2\Bbb
  R^{(n+1)*}$. It then follows immediately that for the bundle
  $\La^2\Cal T^*M$, the irreducible quotient is $T^*M(2)$ while the
  subbundle contained in there is $\La^2T^*M(2)$. Now consider the
  normal frame $\{\ph_1,\dots,\ph_n\}$ for $T^*M(2)$. Under the
  isomorphism $\La^{n-1}TM(n-1)\cong T^*M(2)$ the element $\ph_i$
  corresponds of course (up to a sign which is not relevant for us) to
  the wedge product of the $\xi_j$ for $j\neq i$. Using this, we can
  read off the following from part (1) of Theorem \ref{thm:proj2}:

  \begin{corollary}\label{cor:proj3}
    In terms of a normal frame $\{\ph_1,\dots,\ph_n\}$ for $T^*M(2)$
    and the corresponding normal coordinates $x_1,\dots,x_n$, any
    normal solution of the first BGG operator on $T^*M(2)$ can be
    written as a linear combination with constant coefficients of the
    forms $\ph_i$ for $i=1,\dots,n$ and $x_i\ph_j-x_j\ph_i$ for
    $i<j$. On the homogeneous model the $\ph_i$ are coordinate forms,
    each of the listed forms is a solution, and they form a basis for
    the space of all solutions.
\end{corollary}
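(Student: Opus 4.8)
The plan is to obtain the corollary as a direct translation of part (1) of Theorem \ref{thm:proj2}, specialised to $r=n-1$, through the isomorphism $\La^{n-1}TM(n-1)\cong T^*M(2)$ recorded just above the statement. Under this isomorphism the element $\ph_i$ corresponds, up to a sign, to the wedge product $\eta_i:=\xi_1\wedge\dots\wedge\xi_n$ with the factor $\xi_i$ omitted. Hence it suffices to rewrite each of the two families of sections appearing in Theorem \ref{thm:proj2}(1) for $r=n-1$ in terms of the $\eta_i$, and then to match the resulting spans.

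The first family consists of the $(n-1)$--fold products $\xi_{i_1}\wedge\dots\wedge\xi_{i_{n-1}}$; each such product is determined by the single omitted index and equals $\eta_i$ up to sign, so it maps to a multiple of $\ph_i$. This accounts for the forms $\ph_1,\dots,\ph_n$. For the second family I would fix the two omitted indices $a<b$, so that the $(n-2)$--form $\xi_{i_1}\wedge\dots\wedge\xi_{i_{n-2}}=\eta_{ab}$ is $\xi_1\wedge\dots\wedge\xi_n$ with both $\xi_a$ and $\xi_b$ deleted. The key observation is that in the sum $\sum_j x_j\,\xi_j\wedge\eta_{ab}$ every term with $j\neq a,b$ vanishes, because $\xi_j$ then already occurs among the remaining factors; only $j=a$ and $j=b$ survive. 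Reinserting $\xi_a$ (respectively $\xi_b$) into $\eta_{ab}$ recovers $\eta_b$ (respectively $\eta_a$), up to a sign fixed by the number of transpositions needed.

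The step I expect to be the only real obstacle is the sign bookkeeping, since the whole point of the corollary is that the surviving combination is the \emph{antisymmetric} one $x_a\ph_b-x_b\ph_a$ rather than the symmetric one. Counting transpositions gives $\xi_a\wedge\eta_{ab}=(-1)^{a-1}\eta_b$ and $\xi_b\wedge\eta_{ab}=(-1)^{b}\eta_a$; adopting the sign convention $\eta_i=(-1)^{i-1}\ph_i$ coming from the natural isomorphism, the surviving sum becomes $(-1)^{a+b}(x_a\ph_b-x_b\ph_a)$. Thus, up to an irrelevant overall sign, the second family produces exactly the forms $x_i\ph_j-x_j\ph_i$ for $i<j$, and together with the $\ph_i$ these span the image of the full list from Theorem \ref{thm:proj2}(1). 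This establishes the first assertion.

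For the homogeneous model $S^n$ I would invoke that $\fg_-$ is abelian in the projective case, so that by Proposition \ref{prop:homog-frame} the $\xi_i$ are coordinate vector fields and hence the $\ph_i$ are coordinate $1$--forms (after the tensoring with the density bundle). The last sentence of Theorem \ref{thm:proj2}(1) asserts that for $r=n-1$ each listed $\La^{n-1}TM(n-1)$--section is an actual solution and that these sections form a basis; transporting this through the isomorphism shows that each $\ph_i$ and each $x_i\ph_j-x_j\ph_i$ is a solution and that they form a basis. As a consistency check, the count is $n+\binom{n}{2}=\binom{n+1}{2}=\dim\La^2\Bbb R^{(n+1)*}$, matching the dimension of the solution space.
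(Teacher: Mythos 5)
Your proof is correct and takes essentially the same route as the paper: there, Corollary \ref{cor:proj3} is simply ``read off'' from part (1) of Theorem \ref{thm:proj2} with $r=n-1$ via the isomorphism $\La^{n-1}TM(n-1)\cong T^*M(2)$, under which $\ph_i$ corresponds up to sign to the wedge product of the $\xi_j$ for $j\neq i$ --- precisely your translation. If anything, your explicit sign bookkeeping is more careful than the paper's, which dismisses the signs as ``not relevant for us'' even though the alternating convention $\eta_i=(-1)^{i-1}\ph_i$ is exactly what makes the antisymmetric combinations $x_i\ph_j-x_j\ph_i$ (rather than mixed-sign ones) come out of the second family.
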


We can use an analogue of homogenising (as in projective algebraic geometry) to
present the result in a more uniform way.  Let us start with a normal
frame $\{\tilde\ph_1,\dots,\tilde\ph_n\}$ for the bundle $T^*M(1)$. Of
course, we simply get $\ph_i=X^0\tilde\ph_i$ for $i=1,\dots,n$, where
$X^0\in\Ga(\Cal E(1))$ is the non--vanishing section defining the normal
frame, see \ref{projective}. Since the product $x_iX^0$ is just the
homogeneous coordinate $X^i$, we can write the remaining sections from
the corollary as $X^i\tilde\ph_j-X^j\tilde\ph_i$ for $1\leq i<j\leq
n$. Even more uniformly, we can put $\tilde\ph_0=0$, and then obtain
all sections as $X^i\tilde\ph_j-X^j\tilde\ph_i$ for $0\leq i<j\leq n$.
We can now use this result to give a complete list of potential normal
solutions of the first BGG operator defined on $S^2T^*M(4)$.

\begin{proposition}\label{prop:proj3}
  Consider a normal frame $\{\ph_{ij}:1\leq i\leq j\leq n\}$ for the
  bundle $S^2T^*M(4)$ on a manifold endowed with a projective
  structure. Then any normal solution of the first BGG operator
  defined on this bundle can be written as a linear combination with
  constant coefficients of the following sections:
  \begin{alignat*}{2}
    &\ph_{ij} && i\leq j\\
    &x_k\ph_{ij}-x_j\ph_{ik} && i\leq j<k\\
    &x_k\ph_{ij}-x_i\ph_{jk} && i<j\leq k\\
    &x_jx_k\ph_{ii}-x_ix_k\ph_{ij}-x_ix_j\ph_{ik}+{x_i}^2\ph_{jk}
    && i<j\leq k\\
    &x_kx_\ell\ph_{ij}-x_jx_k\ph_{i\ell}-x_ix_\ell\ph_{jk}+x_ix_j\ph_{k\ell} 
    &\quad& i<j<k\leq\ell\\
    & x_jx_\ell\ph_{ik}-x_jx_k\ph_{i\ell}-x_ix_\ell\ph_{jk}+x_i
    x_k\ph_{j\ell} && i<j\leq k<\ell 
  \end{alignat*}
On the homogeneous model, each of these sections is a solution, and
they form a basis for the space of all solutions. 
\end{proposition}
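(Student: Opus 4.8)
The plan is to realise the order--one operator on $S^2T^*M(4)$ as the first BGG operator for the irreducible $G$--representation $\Bbb V=\La^2\Bbb R^{(n+1)*}\odot\La^2\Bbb R^{(n+1)*}=\bS_{(2,2)}\Bbb R^{(n+1)*}$, the Cartan product whose $P$--irreducible quotient induces $S^2T^*M(4)$, and then to run the machine of Theorem \ref{thm:proj2} one step further. Concretely, I would start from the cotractor--wedge frame $\{t^p\wedge t^q\}$ of $\La^2\Cal T^*M$ used in the $r=n-1$ case, for which Corollary \ref{cor:proj3} (in its homogenised form, with $\tilde\ph_0:=0$ and $x_0:=1$) records that $\Pi(t^p\wedge t^q)=\Phi_{pq}$, where $\Phi_{pq}:=X^p\tilde\ph_q-X^q\tilde\ph_p\in\Ga(T^*M(2))$ and the $\tilde\ph_i$ form the normal frame of $T^*M(1)$.

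The representation $\Bbb V$ sits inside $S^2\La^2\Bbb R^{(n+1)*}$ as the kernel of the wedge map to $\La^4\Bbb R^{(n+1)*}$. Following the device used in the proof of Theorem \ref{thm:proj2}(3), I would build a frame of the subbundle $\bS_{(2,2)}\Cal T^*M\subset S^2\La^2\Cal T^*M$ out of constant linear combinations of the symmetric products $(t^p\wedge t^q)\odot(t^r\wedge t^s)$; by Lemma \ref{lem:basic} and Theorem \ref{thm:basic} every parallel section of the tractor bundle is then a constant combination of these frame elements. Since projection is multiplicative on symmetric products, one computes once and for all
\[
\Pi\big(\Phi_{pq}\odot\Phi_{rs}\big)
= x_px_r\,\ph_{qs}-x_px_s\,\ph_{qr}-x_qx_r\,\ph_{ps}+x_qx_s\,\ph_{pr},
\]
where $\ph_{ab}$ is the normal frame of $S^2T^*M(4)$, every $\ph$ with a zero subscript is read as $0$ (because $\tilde\ph_0=0$), and $x_0=1$. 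Organising the admissible index patterns by the number of zeros then produces exactly the six displayed families: no zero gives the three degree--two families (the repeated--index pattern $\Phi_{ij}\odot\Phi_{ik}$ yielding the $\ph_{ii}$ line, the two remaining coincidence patterns giving the other two), a single zero gives the two degree--one families, and a zero in each factor gives the degree--zero family $\ph_{ij}$.

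The main obstacle is the combinatorial bookkeeping of the Cartan--product component: one must verify that the six families genuinely assemble into a basis of the projection of $\bS_{(2,2)}\Cal T^*M$ rather than leaking into the $\La^4$ summand, and that they are linearly independent and exhaust the solution space. The cleanest route is to index a basis of $\bS_{(2,2)}\Bbb R^{(n+1)*}$ by semistandard Young tableaux of shape $(2,2)$ with entries in $\{0,\dots,n\}$, observe that the number of entries equal to $0$ matches the polynomial degree of the corresponding projected section, and check that the six inequality ranges in the statement are precisely the standard--tableaux conditions. The homogeneous model statement is then automatic: by Proposition \ref{prop:firstbgg}(4) all solutions on $G/P=S^n$ are normal and the parallel sections are canonically identified with $\Bbb V$, so the listed sections form a basis and each is a genuine solution, with the $\tilde\ph_i$ (and hence the $\ph_{ij}$) being coordinate (co)frames by Proposition \ref{prop:homog-frame}.
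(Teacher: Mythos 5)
Your proposal is correct and follows essentially the same route as the paper: identify the relevant tractor bundle as the Cartan product (the kernel of the wedge map inside $S^2\La^2\Cal T^*M$), take symmetric products of the $\La^2\Cal T^*M$ frame underlying Corollary \ref{cor:proj3}, use multiplicativity of the projection $\Pi$, and recombine --- your semistandard-tableau bookkeeping is just an explicit indexing of the recombination step that the paper performs by listing the adapted frame elements $\ps_{ab}\vee\ps_{ac}$, $\ps_{ab}\vee\ps_{cd}+\ps_{ac}\vee\ps_{bd}$, $\ps_{ad}\vee\ps_{bc}+\ps_{ac}\vee\ps_{bd}$ (whose index ranges are exactly the semistandard conditions). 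The only blemishes are cosmetic: $\Pi(\Phi_{pq}\odot\Phi_{rs})$ should read $\Pi\bigl((t^p\wedge t^q)\odot(t^r\wedge t^s)\bigr)=\Phi_{pq}\vee\Phi_{rs}$, since the $\Phi$'s are already projected objects, and the number of zero entries in a tableau equals $2$ minus (not equal to) the polynomial degree of the corresponding projected section, as your own degree count by number of zeros in fact shows.
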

\begin{proof}
  We have seen above that the $P$--irreducible quotient of the
  $G$--irreducible representation $\La^2\Bbb R^{(n+1)*}$ induces the
  bundle $T^*M(2)$. Consequently, the $P$--irreducible quotient of
  $S^2(\La^2\Bbb R^{(n+1)*})$ induces $S^2T^*M(4)$. Elementary
  representation theory shows that $S^2(\La^2\Bbb R^{(n+1)*})$ splits
  into two irreducible components. The smaller of those is mapped
  isomorphically to $\La^4\Bbb R^{(n+1)*}$ by the wedge product of
  forms, which easily implies that it is contained in the kernel of
  the projection to the $P$--irreducible quotient. Hence the right
  tractor bundle to start with is induced by the Cartan product, which
  is the kernel of the wedge product.
  
  To obtain Corollary \ref{cor:proj3} we have used a local frame for
  $\La^2\Cal T^*M$, whose elements we number as $\ps_{ab}$ with $0\leq
  a<b\leq n$. In the notation of that corollary, the projections of
  the elements of the frame are given by $\Pi(\ps_{0i})=\ph_i$ for
  $i=1,\dots,n$ and $\Pi(\ps_{ij})=(x_i\ph_j-x_j\ph_i)$ for $1\leq
  i<j\leq n$. On the homogeneous model, the $\ps_{ab}$ form a basis
  for the space of parallel sections, while in general any parallel
  section is a linear combination with constant coefficient of these
  sections.
  
  If we order pairs of indices in some way, the symmetric products
  $\ps_{ab}\vee\ps_{cd}$ with $ab\leq cd$ form a frame for
  $S^2(\La^2\Cal T^*M)$. Now we can modify this frame in such a way
  that some of its elements lie in the kernel of the wedge product
  while the remaining ones project isomorphically to a frame for
  $\La^4\Cal T^*M$. Ignoring the latter elements, we arrive at a frame
  for our tractor bundle, which is a basis for the space of parallel
  sections on the homogeneous model, while in general any parallel
  section is a linear combinations with constant coefficients of
  elements of the frame.

Working this out, we see that the frame in question arises from the
following elements 
\begin{alignat*}{2}
  &\ps_{ab}\vee\ps_{ac} && 0\leq a<b\leq c\leq n\\
  &\ps_{ab}\vee\ps_{cd}+\ps_{ac}\vee\ps_{bd} &\quad& 0\leq a<b\leq c<d\leq
  n \\
  &\ps_{ad}\vee\ps_{bc}+\ps_{ac}\vee\ps_{bd} && 0\leq a<b<c\leq d\leq n
\end{alignat*}
Now the projections of a symmetric product are just the symmetric
products of the projections of the individual factors, and after recombining some elements we arrive at the claimed list. 
\end{proof}

\subsection{Applications to (pseudo--)Riemannian
  geometry}\label{Riemann} 

We will always use the term ``Riemannian'' to also include metrics of
indefinite signature. Indeed, the results we discuss here are
independent of the signature in question.  It is a standard technique
in Riemannian geometry to study conformal changes of metrics or, more
technically speaking, to study the conformal structure induced by a
Riemannian metric.  Properties of Riemannian manifolds depending only
on the induced conformal class are then considered as particularly
robust. However, via its Levi--Civita connection, a Riemannian metric
on a smooth manifold also determines a projective structure.  The
distinguished paths of this structure are the geodesic paths of the
metric, so this point of view in particular captures aspects related
to geodesics. Note that a Riemannian metric gives rise to a canonical
volume density and thus to a trivialisation of all projective density
bundles. 

We believe that the projective structure induced by a Riemannian
metric has by far not been used up to its potential. This
approach seems particularly promising since several of the fundamental
differential equations of Riemannian geometry admit a projectively
invariant interpretation. This was pointed out and studied in
\cite{eastwood-notes}. In particular, this is true for the
infinitesimal automorphism equation, which is among the examples we
study below. Here we want to continue exploring this point of view. In
particular, we want to point out that normality as a solution of a
projective first BGG operator gives rise to highly interesting
conditions on solutions of some important natural differential
equations in Riemannian geometry.

\medskip

Returning to projective structures, we can explicitly interpret the
first BGG operators of order one on symmetric powers of the cotangent
bundle. From the description in \ref{proj2}, is is clear that these
must map sections of $S^kT^*M(w)$ to sections of $S^{k+1}T^*M(w')$ for
appropriate weights $w$ and $w'$. It then follows easily that in terms
of a chosen connection in the projective class, the operator is just
given by symmetrising the covariant derivative (and the weight is
chosen in such a way that this does not depend on the choice of the
connection in the class).

If we look at the projective structure induced by a Riemannian metric,
then as noted above the weights do not play a role, so one obtains the
standard Killing operators on symmetric tensors. In particular, for
$k=1$ its solutions (which are one--forms) can be interpreted as
vector fields using the metric, and then they are exactly the
infinitesimal isometries. For $k=2$, one obtains (via the metric)
symmetric Killing tensors of valence two which are important in
several parts of Riemannian geometry and integrable systems. The
solutions which are normal in the projective sense form an interesting
subclass of Killing vectors respectively Killing tensors, which, to
our knowledge, has not been studied in any detail so far.

We start with the case $k=1$, phrase things in terms of vector fields
rather than one--forms, and use the Penrose abstract index notation.
\begin{proposition}
  Let $(M,g)$ be a Pseudo--Riemannian manifold of dimension $n\geq 2$
  with Levi--Civita connection $\nabla$. Then a vector field
  $\xi=\xi^k$ on $M$ is a Killing field if and only if the associated
  one--form $\ps_i:=g_{ij}\xi^j$ is a solution of the projective first
  BGG operator defined on $T^*M(2)$. 
  
  If $n=2$ then any such solution is normal in the projective
  sense. If $n\geq 3$, then let $R_{ij}{}^k{}_\ell$ be the Riemann
  curvature tensor of $g$, $R_{ij}:=R_{ki}{}^k{}_j$ its Ricci
  curvature. A solution $\ps_i$ is normal in the projective sense if
  and only if
  \begin{equation}
    \label{eq:Knorm}
    R_{ij}{}^k{}_\ell\ps_k=\tfrac{2}{n-1}\ps_{[i}R_{j]\ell}. 
  \end{equation}
  In particular, any Killing vector field $\xi$ which is normal in the
  projective sense can be written as a linear combination with
  constant coefficients of the vector fields obtained via the metric
  from the forms listed in Corollary \ref{cor:proj3}.
\end{proposition}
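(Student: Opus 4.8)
For the first claim I would use the standard identification of the order-one projective first BGG operator on $T^*M(2)$ with the Killing operator. As explained above, in terms of any connection in the projective class this operator is the symmetrisation $\psi_i\mapsto\nabla_{(i}\psi_{j)}$, and for the Levi--Civita connection of $g$ the density weights play no role; since $\nabla g=0$, the form $\psi=g(\xi,\cdot)$ solves the equation precisely when $\nabla_{(i}\psi_{j)}=0$, i.e.\ exactly when $\xi$ is a Killing field. For the normality statements I would pass to the tractor bundle $\Lambda^2\Cal T^*M$, whose irreducible quotient is $T^*M(2)$ and whose sub-bundle is $\Lambda^2T^*M(2)$ (see \ref{proj3}). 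By Proposition \ref{prop:firstbgg}(3) the projection $\Pi$ is injective on parallel sections, and the BGG splitting operator $L$ satisfies $\Pi\circ L=\id$; hence a solution $\psi$ is normal if and only if $L(\psi)$ is parallel for the tractor connection $\nabla^V$.

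The core of the argument is the explicit computation of $\nabla^V L(\psi)$ in a splitting adapted to the filtration. Writing a section of $\Lambda^2\Cal T^*M$ as a pair $(\psi_a;\mu_{ab})$, with $\psi_a\in T^*M(2)$ the projecting part and $\mu_{ab}\in\Lambda^2T^*M(2)$ the injecting part, the cotractor connection (with projective Schouten tensor $P_{ij}$) induces a formula for $\nabla^V$ on this bundle. Vanishing of the top slot of $\nabla^V L(\psi)$ recovers the Killing equation and forces $\mu_{ab}=\nabla_{[a}\psi_{b]}$, which for a Killing form equals $\nabla_a\psi_b$. The substantive condition is the vanishing of the bottom slot, of the schematic form $\nabla_i\mu_{ab}+P_{i[a}\psi_{b]}$ up to normalisation; note that here the Schouten tensor couples to $\psi$ only algebraically.

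The key simplification is the classical second-order Killing identity, which expresses $\nabla_i\nabla_a\psi_b$ algebraically through the Riemann tensor and $\psi$, with no covariant derivatives of curvature. Substituting this, and using that for the Levi--Civita connection one has $P_{ij}=\tfrac{1}{n-1}R_{ij}$ (as Ricci is then symmetric), the bottom-slot condition becomes purely algebraic and reduces precisely to \eqref{eq:Knorm}; equivalently it is the vanishing $W_{ij}{}^k{}_\ell\psi_k=0$ of the projective Weyl tensor contracted with $\psi$. For $n=2$ equation \eqref{eq:Knorm} is an identity --- both sides equal $\tfrac{R}{2}(\psi_i g_{j\ell}-\psi_j g_{i\ell})$ by the two-dimensional formula $R_{ijk\ell}=\tfrac{R}{2}(g_{ik}g_{j\ell}-g_{i\ell}g_{jk})$ --- so every solution is automatically normal. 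Once $\psi$ is known to be normal, Corollary \ref{cor:proj3} exhibits it as a constant-coefficient combination of the forms listed there, and passing back to vector fields via $g$ gives the final assertion.

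The main obstacle I anticipate is the explicit tractor computation of the bottom slot of $\nabla^V L(\psi)$: one must fix conventions for the tractor projectors and for $P_{ij}$, pin down the normalisation of the algebraic $P$-coupling, and then carry out the index manipulations --- applying the Killing identity together with the symmetries of the Riemann tensor --- so as to land exactly on \eqref{eq:Knorm} with the stated factor $\tfrac{2}{n-1}$. By comparison the identification with the Killing operator, the two-dimensional specialisation, and the appeal to Corollary \ref{cor:proj3} are routine.
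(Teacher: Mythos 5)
Your proposal is correct, and it arrives at the same pivotal characterisation as the paper --- for a Killing solution $\ps$, projective normality is equivalent to $C_{ij}{}^k{}_\ell\ps_k=0$ for the projective Weyl tensor, which upon inserting $C_{ij}{}^k{}_\ell=R_{ij}{}^k{}_\ell-\tfrac{2}{n-1}\delta^k_{[i}R_{j]\ell}$ gives \eqref{eq:Knorm} --- but by a genuinely different route. You propose the direct prolongation computation: write the tractor connection on $\La^2\Cal T^*M$ in slots, note that the top slot of $\nabla^V L(\ps)$ vanishes exactly by the Killing equation with $\mu_{ab}=\nabla_a\ps_b$, and then make the bottom slot algebraic via the classical identity expressing $\nabla_i\nabla_a\ps_b$ through the Riemann tensor, together with $P_{ij}=\tfrac{1}{n-1}R_{ij}$. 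The paper avoids every explicit formula of this kind: it invokes the general criterion (from \cite{BCEG,mrh-coupling,hsss}) that normality is equivalent to $\partial^*(\ka\bullet S(\ps))=0$, argues by homogeneity that $\partial^*$ only detects the projection of $\ka\bullet S(\ps)$ to $\La^2T^*M\otimes T^*M(2)$, which by $\Pi(S(\ps))=\ps$ is a \emph{nonzero multiple} of $C_{ij}{}^k{}_\ell\ps_k$, and then uses a Schur-type argument --- $\partial^*$ acts by nonzero scalars on each of two non-isomorphic irreducible summands, and by the Bianchi identity $C_{ij}{}^k{}_\ell\ps_k$ lies wholly in one of them --- to conclude that the normality condition is precisely $C_{ij}{}^k{}_\ell\ps_k=0$. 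The trade-off is the one you anticipate: your route is elementary and produces explicit constants, but carries the burden of fixing conventions for the splitting operator, the Schouten coupling and the Killing identity, and of checking that the bottom-slot tensor (antisymmetric in the form indices) really recombines into the Weyl contraction; the paper's representation-theoretic argument sidesteps all such normalisation issues at the cost of leaning on the BGG machinery. Your two-dimensional argument (both sides of \eqref{eq:Knorm} equal $\tfrac{R}{2}(\ps_ig_{j\ell}-\ps_jg_{i\ell})$) is an equivalent form of the paper's observation that the projective Weyl tensor vanishes identically when $n=2$. One small repair: the equivalence ``$\ps$ is normal if and only if $L(\ps)$ is parallel'' does not follow from injectivity of $\Pi$ on parallel sections alone; you also need that every parallel lift of $\ps$ coincides with $L(\ps)$, which follows from the uniqueness clause in the characterisation of the splitting operator ($\Pi\circ L=\id$ and $\partial^*\circ\nabla^V\circ L=0$), since a parallel section trivially satisfies $\partial^*\nabla^V s=0$.
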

\begin{proof}
  In this proof we use significantly more information on BGG sequences
  and the normalisation condition for parabolic geometries than
  outlined in section \ref{2}. The necessary facts can be found in
  \cite{cap-slovak-book} and \cite{mrh-thesis}.

  We have noted above that the first BGG operator on $T^*M(2)$ is
  given by taking one covariant derivative and then symmetrising the
  two indices. It is well known that applying this to the one--form
  obtained from a vector field via the metric one obtains the Killing
  equation.
  
  We have noted in \ref{proj3} that the tractor bundle inducing this
  operator is $\La^2\Cal T^*M$, the second exterior power of the dual
  of the standard tractor bundle. We have also seen there that the
  composition series of this tractor bundle consists of just two parts,
  a subbundle isomorphic to $\La^2T^*M(2)$ and the irreducible
  quotient $T^*M(2)$. We write $\Pi$ for the projection map to this
  irreducible quotient. Likewise, each of the bundles
  $\La^kT^*M\otimes \La^2\Cal T^*M$ of $k$--forms with values in our
  tractor bundle has a composition series with two factors, a
  subbundle isomorphic to $\La^kT^*M\otimes\La^2T^*M(2)$, the quotient by
  which is isomorphic to $\La^kT^*M\otimes T^*M(2)$.
  
  In the construction of the BGG operators, one uses the so--called
  splitting operator $S:\Ga(T^*M(2))\to\Ga(\La^2\Cal T^*M)$ and the
  bundle maps 
$$\partial^*:\La^kT^*M\otimes \La^2\Cal T^*M\to
  \La^{k-1}T^*M\otimes \La^2\Cal T^*M .$$ 
The curvature $\ka$ of the
  canonical Cartan connection can be interpreted as a two--form on $M$
  with values in the bundle $\frak{sl}(\Cal TM)$. In particular, the
  values of $\ka$ act on any tractor bundle. Given a section $\ps$ of
  the bundle $T^*M(2)$, we can first apply the splitting operator and
  then act with the curvature on the result to obtain a two--form with
  values in $\La^2\Cal T^*M$, which is usually denoted by $\ka\bullet
  S(\ps)$. Now it follows from the general theory (see
  \cite{BCEG,mrh-coupling,hsss}) that normality of the solution $\ps$
  is equivalent to the fact that $\partial^*(\ka\bullet S(\ps))=0$.
  
  Since $\partial^*$ preserves homogeneity, it vanishes on the
  subbundle 
$$
\La^2T^*M\otimes\La^2T^*M(2)\subset
  \La^kT^*M\otimes\La^2\Cal T^*M
$$ 
and thus factors through the
  quotient $\La^2T^*M\otimes T^*M(2)$ and its values lie in the
  subbundle $T^*M\otimes\La^2T^*M(2)\subset T^*M\otimes\La^2\Cal
  T^*M$. Hence $\partial^*$ acts between two isomorphic completely
  reducible bundles, which both split into two non--isomorphic
  irreducible summands, so it has to act by a multiple of the identity
  on each of the two summands. It also follows from general results
  that $\partial^*$ must map onto the subbundle in question so both
  these multiples must be non--zero.
  
  From the well known form of $\ka$ (see e.g.~section 5.1.1 of
  \cite{mrh-thesis}) and the fact that $\Pi(S(\ps))=\ps$, we conclude
  that the projection of $\ka\bullet S(\ps)$ to the quotient
  $\La^2T^*M\otimes T^*M(2)$ must be a non--zero multiple of
  $C_{ij}{}^k{}_\ell\ps_k$, where $C_{ij}{}^k{}_\ell$ denotes the
  projective Weyl--curvature. If $n=2$ then it is well known that the
  projective Weyl--curvature vanishes identically, so normality
  follows. For $n\geq 3$, the projective Weyl curvature satisfies the
  first Bianchi--identity, so $ C_{ij}{}^k{}_\ell\ps_k$ lies in the
  kernel of the alternation over the three lower indices. This
  condition characterises exactly one of the two irreducible summands
  mentioned above, so we conclude that normality of $\ps$ is
  equivalent to $C_{ij}{}^k{}_\ell\ps_k=0$.
  
  Now the projective Weyl tensor is given as the tracefree part (with
  respect to the Ricci type contraction) of the curvature of any
  connection in the projective class. In the case of the projective
  structure determined by a Riemannian metric, we can use the Riemann
  curvature tensor. The Ricci type contraction of this is just the
  classical Ricci tensor and hence is symmetric. Then the well known
  formula for the projective Weyl tensor shows that
$$
C_{ij}{}^k{}_\ell=R_{ij}{}^k{}_\ell-\frac{2}{n-1}\delta^k_{[i}R_{j]\ell}. 
$$
Hooking $\ps_k=\xi^ag_{ak}$ into this expression we immediately get
\eqref{eq:Knorm}.
\end{proof}

Finally we note that we may also interpret the result of Proposition
\ref{prop:proj3} in a Riemannian context: Given a section
$\xi=\xi^{ab}$ of the bundle $S^2TM$ on a Riemannian manifold, one can
use the metric to lower the two indices and then consider the equation
$\nabla_{(a}\xi_{bc)}=0$. Solutions of this equation are called
Killing--2--tensors and they play a crucial role in the study of
symmetries of mechanical systems. So similarly as discussed for
Killing fields above, solutions which satisfy the condition of
projective normality can be obtained from linear combinations with
constant coefficients of the sections listed in Proposition
\ref{prop:proj3}. Thus the condition of projective normality should be
very interesting for Riemannian geometry. Using results from section
5.3 of \cite{mrh-thesis}, this condition can be made explicit along
similar lines as the one for Killing forms. Since the result is rather
involved, we do not write it out explicitly here.

\subsection{Example 2: Conformal structures}\label{conformal}
We next apply our results to (oriented) pseudo--Riemannian conformal
structures of arbitrary signature. As we shall see, this looks a bit
more complicated than the projective case, but making our results
explicit is still straightforward. In the conformal case, the first
BGG operators coming from tractor bundles induced by fundamental
representations, are the operator governing Einstein rescalings and
the conformal Killing equations on differential forms. So these are of
strong interest in conformal geometry and are studied intensively.

By classical results going back to E.~Cartan, oriented conformal
structures of signature $(p,q)$ in dimension $n=p+q\geq 3$ are
equivalent to parabolic geometries of type $(G,P)$, where
$G=SO(p+1,q+1)$ and $P\subset G$ is the stabiliser of an isotropic
ray in the standard representation $\Bbb R^{p+1,q+1}$ of $G$. To pass
to matrix representations, one usually takes the standard basis of
$\Bbb R^{p+1,q+1}$ numbered as $e_0,\dots,e_{n+1}$ and defines the
inner product by $\langle e_0,e_{n+1}\rangle=\langle
e_{n+1},e_0\rangle=1$, $\langle e_i,e_i\rangle=\eps_i$ for
$i=1,\dots,n$ and all other inner products vanishing. Here $\eps_i=1$
for $i=1,\dots,p$ and $\eps_i=-1$ for $i=p+1,\dots,n$. So
$e_1,\dots,e_n$ form an orthonormal basis for the standard inner
product of signature $(p,q)$ on an $n$--dimensional subspace and the
other two elements are a light cone basis for the orthocomplement of
that subspace. For this choice of basis, one obtains
$$
\frak g=\frak{so}(p+1,q+1)=\left\{\begin{pmatrix} a & Z & 0 \\ 
X & A & -\Bbb I_{p,q}Z^t \\ 0 & -X^t\Bbb I_{p,q} & -a \end{pmatrix}\right\},
$$ 
where $a\in\Bbb R$, $X\in\Bbb R^n$, $Z\in\Bbb R^{n*}$, $A\in\frak{so}
(p,q)$, and $\Bbb I_{p,q}$ is the diagonal matrix with entries
$\eps_i$, see section 1.6.2 of \cite{cap-slovak-book}. The grading corresponding
to the parabolic subgroup $P$ is of the form
$\fg=\fg_{-1}\oplus\fg_0\oplus\fg_1$, with $\fg_{-1}$ spanned by the elements $X$,
$\fg_0$ by $a$ and $A$, and $\fg_1$ by $Z$. In particular, we can
simply use the components of $X$ as coordinates on $\fg_-=\fg_{-1}$.
Then we compute
\begin{equation}\label{conf-exp}
\exp\begin{pmatrix} 0 & 0 & \dots & 0 & 0\\ -x_1 & 0 &\dots & 0 & 0\\
\vdots & \vdots & & \vdots & \vdots \\  -x_n & 0 &\dots & 0 & 0 \\
0 & \eps_1x_1 & \dots & \eps_nx_n & 0 \end{pmatrix}=
\begin{pmatrix} 1 & 0 & \dots & 0 & 0\\ -x_1 & 1 &\dots & 0 & 0\\
\vdots & \vdots & & \vdots & \vdots \\  -x_n & 0 &\dots & 1 & 0 \\
\frac{\sum\eps_ix_i^2}{2} & \eps_1x_1 & \dots & \eps_nx_n & 1\end{pmatrix}
\end{equation}
From this, we can immediately read off the basic result on parallel
sections of the standard tractor bundle, which by definition is
induced by the standard representation $\Bbb R^{p+1,q+1}$.

\begin{proposition}\label{prop:conf1}
  Let $(M,[g])$ a smooth manifold endowed with a conformal
  pseudo--Riemannian structure of signature $(p,q)$ and let $\Cal TM$
  be the standard tractor bundle. For a normal chart $U\subset M$ with
  corresponding normal coordinates $x_1,\dots,x_n$ let
  $\{s_0,\dots,s_{n+1}\}$ be the normal frame of $\Cal TM$
  corresponding to the standard basis of $\Bbb R^{p+1,q+1}$.

  Then any parallel section of $\Cal T|_U$ is a linear combination
  with constant coefficients of the sections $\tilde s_0:=s_0-\sum
  x_is_i+(\frac12\sum\eps_i x_i^2)s_{n+1}$, $\tilde s_i:=s_i+\eps_i x_i s_{n+1}$
  for $i=1,\dots,n$ and $\tilde s_{n+1}:=s_{n+1}$.
\end{proposition}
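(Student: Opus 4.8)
The plan is to apply Lemma \ref{lem:basic} to the standard representation $\Bbb V=\Bbb R^{p+1,q+1}$ and then read the three displayed sections directly off the explicit exponential \eqref{conf-exp}. By that lemma, a section $s\in\Ga(\Cal T|_U)$ is parallel exactly when the function $f:U\to\Bbb R^{p+1,q+1}$ representing it in the normal trivialisation satisfies $f(\ph(X))=\exp(-X)\cdot v_0$ for the fixed vector $v_0=f(x_0)$. Since the normal frame $\{s_0,\dots,s_{n+1}\}$ is, by definition, the image of the standard basis $\{e_0,\dots,e_{n+1}\}$ under the normal trivialisation, the coefficient functions of $s$ in this frame are precisely the coordinates of $f$ in the standard basis.

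First I would confirm that the matrix on the right of \eqref{conf-exp} is the matrix by which $\exp(-X)$ acts on $\Bbb R^{p+1,q+1}$, where $X\in\fg_{-1}$ is the element whose components are the normal coordinates $x_1,\dots,x_n$. This is the one place needing a sign check: the argument of the exponential in \eqref{conf-exp} is the $\fg_{-1}$--element with lower block $-x$, i.e.\ minus the coordinate element, so its exponential is $\exp(-X)$ in the sense of Lemma \ref{lem:basic}. Writing $v_0=\sum_k c_k e_k$ with constant $c_k$, we then get $f(\ph(X))=\sum_k c_k\,\exp(-X)e_k$, so that $s=\sum_k c_k\tilde s_k$, where each $\tilde s_k$ is the section whose coefficient functions form the $k$--th column of \eqref{conf-exp}. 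Reading off the three column types---the zeroth, the middle ones for $1\le i\le n$, and the last---gives exactly $\tilde s_0=s_0-\sum_i x_is_i+(\tfrac12\sum_i\eps_ix_i^2)s_{n+1}$, $\tilde s_i=s_i+\eps_ix_is_{n+1}$, and $\tilde s_{n+1}=s_{n+1}$. As $v_0$ ranges over all of $\Bbb R^{p+1,q+1}$, the parallel sections are precisely the constant--coefficient combinations of these, as claimed.

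The work here is bookkeeping rather than conceptual: the genuine computation, the matrix exponential \eqref{conf-exp}, has already been performed, so the only point demanding real care is keeping the index and sign conventions consistent, so that the columns of \eqref{conf-exp} are correctly matched with the three sections $\tilde s_0,\tilde s_i,\tilde s_{n+1}$ in the statement.
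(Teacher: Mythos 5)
Your proposal is correct and follows essentially the same route as the paper, which likewise obtains the proposition by combining Lemma \ref{lem:basic} for the standard representation $\Bbb R^{p+1,q+1}$ with the exponential formula \eqref{conf-exp}, whose columns (after your correct sign check that the argument there is $-X$ for $X\in\fg_{-1}$ with components $x_i$) are exactly the sections $\tilde s_0,\tilde s_i,\tilde s_{n+1}$. One small caution: your phrases ``exactly when'' and ``precisely'' overstate Lemma \ref{lem:basic}, which only says that parallel sections have the form $f(\ph(X))=\exp(-X)\cdot v_0$ (on a curved geometry such a section need not be parallel, and indeed the proposition deliberately claims only this one inclusion), but since your argument uses only that direction, its validity is unaffected.
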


Viewing $\Bbb V:=\Bbb R^{p+1,q+1}$ as a representation of the
parabolic subgroup $P$, there is an obvious $P$--invariant filtration
of the form $\Bbb V=\Bbb V^{-1}\supset\Bbb V^0\supset\Bbb V^1$.  Here
$\Bbb V^1$ is the isotropic line stabilised by $P$ and $\Bbb V^0$ is
its orthogonal space. Our choice of the quadratic form was made in
such a way that $\Bbb V^1=\Bbb R\cdot e_0$ while $\Bbb V^0$ is spanned
by $e_0,e_1,\dots,e_n$. Now $\Bbb V/\Bbb V^0$ is a (non--trivial)
one--dimensional representation of $P$ (spanned by the image of
$e_{n+1}$ in the quotient). The natural line bundle induced by this
representation is a density bundle, usually denoted by $\Cal
E[1]$. The quotient $\Bbb V^0/\Bbb V^1$ is $n$--dimensional (spanned
by the images of $e_1,\dots,e_n$). The bundle induced by this quotient
turns out to be $T^*M[1]=T^*M\otimes\Cal E[1]$. 

Passing to associated bundles, the filtration of $\Bbb V$ induces a
filtration $\Cal TM=\Cal T^{-1}M\supset\Cal T^0M\supset\Cal T^1M$ by
smooth subbundles such that $\Cal TM/\Cal T^0M\cong\Cal E[1]$ and
$\Cal T^0M/\Cal T^1M\cong T^*M[1]$. From the construction it is clear
that the normal frame $\{s_0,\dots,s_{n+1}\}$ from Proposition
\ref{prop:conf1} has the property that $s_0$ is a normal frame for
$\Cal T^1M$, while $\{s_0,\dots,s_n\}$ form a normal frame for $\Cal
T^0M$. Further, Lemma \ref{lem:adapted} shows that the natural
projection $\Cal TM\to\Cal TM/\Cal T^0M\cong\Cal E[1]$ maps
$s_0,\dots,s_n$ to zero and $s_{n+1}$ to a normal frame of $\Cal
E[1]$. Likewise, the projection $\Cal T^0M\to \Cal T^0M/\Cal T^1M\cong
T^*M[1]$ annihilates $s_0$ and maps $s_1,\dots,s_n$ to a normal frame
of $T^*M[1]$. Using this, we can define the conformal version of
generalised homogeneous coordinates.

\begin{definition}\label{def:conf-homog}
Let $(M,[g])$ be a smooth manifold endowed with a pseudo--Riemannian
conformal structure of signature $(p,q)$. Let $U\subset M$ be a normal
chart and let $X^0$ be the nowhere vanishing section of $\Cal E[1]|_U$ 
defining a normal frame for this bundle. Then we define the
\textit{generalised homogeneous coordinates} for $M$ on $U$ as the
sections $X^0,\dots,X^{n+1}$ of $\Cal E[1]$ defined by $X^i:=\eps_ix_iX^0$
for $i=1,\dots,n$ and $X^{n+1}=\frac{\sum_j\eps_jx_j^2}{2}X^0$. 
\end{definition}
Observe that the generalised homogeneous coordinates satisfy the
relation $2X^0X^{n+1}+\sum_{i=1}^n\eps_i (X^i)^2=0$ (the products can
be interpreted as sections of $\Cal E[2]$). This exactly corresponds
to the defining equation of the light cone in $\Bbb R^{p+1,q+1}$ for
our choice of inner product. This is very natural in view of the fact
that the homogeneous model of our geometry is the space of isotropic
rays in $\Bbb R^{p+1,q+1}$, which is isomorphic to $S^p\times S^q$. 

\medskip

From the filtration of the standard representation one can immediately
read off the filtration for its exterior powers. Looking at $\La^r\Bbb
V$, one gets a $P$--invariant subspace spanned by the wedge products
which involve $e_0$ but not $e_{n+1}$. This is contained in a
$P$--invariant subspace spanned by the wedge products which either
involve $e_0$ or don't involve $e_{n+1}$. The $P$--irreducible
quotient of $\La^r\Bbb V$ is spanned by the images of all elements of
the form $e_{i_1}\wedge\dots \wedge e_{i_{r-1}}\wedge e_{n+1}$ with
$1\leq i_1<\dots<i_{r-1}\leq n$. This shows that the $P$--irreducible
quotient of the tractor bundle $\La^r\Cal T$ is isomorphic to
$\La^{r-1}T^*M[r]$. In particular, for $r=2$, one gets $T^*M[2]\cong
TM$.

The first BGG operators corresponding to the fundamental
representations are all well known and intensively studied in the
literature. The first BGG operator on $\Cal E[1]$ is the second order
operator governing \textit{almost Einstein} scales, see
e.g.~\cite{gover-aes}. An important feature of this operator is that on
any conformal manifold, all its solutions are automatically normal.
For $r\geq 2$, the first BGG operator on $\La^{r-1}T^*M[k]$ coming
from the $r$th exterior power of the standard tractor bundle is the
\textit{conformal Killing operator} on $(r-1)$--forms, see
e.g.~\cite{gover-silhan-forms,semmelmann}. This is the first order operator given by
taking one covariant derivative and then projecting to the highest
weight component. Solutions of this operator are called
\textit{conformal Killing forms} and normal solutions are called
\textit{normal conformal Killing forms}, see \cite{leitner-normal}.

Finally, in parallel to the projective case, there are some first BGG
operators of higher order which are easy to describe. Starting from
the symmetric power $S^k\Cal T$ one obtains the $P$--irreducible
quotient $\Cal E[k]$ and the first BGG operator on this bundle is of
order $k+1$. Likewise, forming the Cartan product $S^{k-1}\Bbb V\odot
\La^r\Bbb V$ one obtains a $G$--representation with $P$--irreducible
quotient inducing the bundle $\La^{r-1}T^*M[r+k-1]$ and the first BGG
operator defined on this bundle is of order $k$. 

\begin{theorem}\label{thm:conf2}
Let $(M,[g])$ be a smooth manifold endowed with a pseudo--Riemannian
conformal structure of signature $(p,q)$. Let $U\subset M$ be a normal
chart and $x_1,\dots,x_n$ the corresponding normal coordinates. 

(1) Any normal solution of the first BGG operator on $\Cal E[k]$ over
$U$ is given as a homogeneous polynomial of degree $k$ in the
generalised homogeneous coordinates $X^0,\dots,X^{n+1}$. In
particular, any almost Einstein scale on $M$ restricts on $U$ to a
linear combination with constant coefficients of the generalised
homogeneous coordinates. On the homogeneous model $S^p\times S^q$ any
homogeneous polynomial in the generalised homogeneous coordinates
defines a solution.

(2) Let $\{\xi_1,\dots,\xi_n\}$ be a normal frame of the tangent
bundle over $U$. Then any normal conformal Killing vector field on $U$
is a linear combination with constant coefficients of the fields
$\xi_1,\dots,\xi_n$, $\sum_{i=1}^n x_i\xi_i$,
$\eps_jx_j\xi_i-\eps_ix_i\xi_j$ for $1\leq i<j\leq n$, and
$\frac{1}{2}(\sum_{\ell=1}^n\eps_\ell
x_\ell^2)\xi_i+\eps_ix_i\sum_{j=1}^nx_j\xi_j$.  On the homogeneous
model, the $\xi_i$ are a coordinate frame and the vector fields listed
above form a basis for the space of conformal Killing fields.

(3) Let $\{\ph_1,\dots,\ph_n\}$ be a normal frame of the bundle $T^*M$
over $U$ and for $r\geq 2$ let us denote by $\ph_{i_1\dots i_r}$ the
section $(X^0)^{r+1}\ph_{i_1}\wedge\dots\wedge\ph_{i_r}$ of the bundle
$\La^rT^*M[r+1]$. Then any normal conformal Killing $r$--form on $M$ is
a linear combination with constant coefficients of the following forms
(with a hat denoting omission)
\begin{align*}
&\ph_{i_1\dots i_r},\quad \textstyle\sum_{j=1}^nx_j\ph_{ji_1\dots i_{r-1}},\quad 
\sum_{j=1}^{r+1}(-1)^{j-1}\eps_{i_j}x_{i_j}\ph_{i_1\dots
  \widehat{i_j}\dots i_{r+1}}\quad\text{and}\\
&\textstyle(-1)^r\frac{1}{2}(\sum_\ell\eps_\ell x_\ell^2)\ph_{i_1\dots i_{i_r}}+\sum_{j=1}^r(-1)^{r-j}\eps_{i_j}x_{i_j}\left(\textstyle\sum_\ell
x_\ell\ph_{\ell i_1\dots\widehat{i_j}\dots i_r}\right),
\end{align*}
where in each case the $(i_1,\dots)$ runs through all strictly
increasing sequences of numbers between $1$ and $n$.

On the homogeneous model, the forms in the list constitute a basis for
the space of all conformal Killing forms of degree $r$.

(4) For $r\geq 1$ and $k\geq 2$, any normal solution of the first BGG
operator on the bundle $\La^rT^*M[r+k]$ (which is of order $k$) can be
written as a linear combination of the sections listed in (2) (for
$r=1$) respectively (3) with coefficients which are homogeneous
polynomials of degree $k-1$ in the generalised homogeneous coordinates
$X^0,\dots,X^{n+1}$ from Definition \ref{def:conf-homog}.
\end{theorem}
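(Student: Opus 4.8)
The plan is to reduce everything to the explicit parallel frame of the standard tractor bundle already produced in Proposition \ref{prop:conf1} and then push it through the functorial constructions $S^k$, $\La^{r+1}$ and the Cartan product, applying the projection $\Pi$ to the relevant irreducible quotient at each stage. The key structural fact is that for any tractor bundle built functorially from $\Cal T$, the parallel sections are exactly the corresponding functorial expressions (symmetric products, wedge products, tensor products) in the parallel sections $\tilde s_0,\dots,\tilde s_{n+1}$ of $\Cal T$, taken with \emph{constant} coefficients; this is precisely what Lemma \ref{lem:basic} delivers once the explicit exponential \eqref{conf-exp} is in hand, and it is the conformal analogue of the mechanism driving the proof of Theorem \ref{thm:proj2}. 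So in each part I would write the parallel sections down explicitly and then read off $\Pi$, using that $\Pi$ keeps only those frame elements carrying the factor $s_{n+1}$ in the correct multiplicity.

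For part (1) the inducing bundle is $S^k\Cal T$, whose parallel sections are the degree-$k$ symmetric products of the $\tilde s_a$. Since the projection $\Cal T\to\Cal E[1]$ sends $s_0,\dots,s_n\mapsto 0$ and $s_{n+1}\mapsto X^0$, a direct comparison with Definition \ref{def:conf-homog} gives $\Pi(\tilde s_{n+1})=X^0$, $\Pi(\tilde s_i)=X^i$ and $\Pi(\tilde s_0)=X^{n+1}$. As the projection of a symmetric product is the product of the projections, one obtains exactly the homogeneous polynomials of degree $k$ in $X^0,\dots,X^{n+1}$, with the almost-Einstein case $k=1$ giving linear combinations of the $X^a$. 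Parts (2) and (3) are the same computation carried out on $\La^2\Cal T$, respectively $\La^{r+1}\Cal T$, whose irreducible quotient is spanned by the images of $e_{i_1}\wedge\dots\wedge e_{i_{s-1}}\wedge e_{n+1}$. The plan is to enumerate the wedge products $\tilde s_{a_0}\wedge\dots\wedge\tilde s_{a_r}$ and, in each, collect the terms carrying exactly one factor $s_{n+1}$. The four families in (3) match the four types of wedge: $\tilde s_{i_1}\wedge\dots\wedge\tilde s_{i_r}\wedge\tilde s_{n+1}$ gives $\ph_{i_1\dots i_r}$; a leading $\tilde s_0$ together with a trailing $\tilde s_{n+1}$ gives $\sum_j x_j\ph_{j i_1\dots i_{r-1}}$; all indices in $\{1,\dots,n\}$ gives the alternating sum $\sum_{j}(-1)^{j-1}\eps_{i_j}x_{i_j}\ph_{i_1\dots\widehat{i_j}\dots i_{r+1}}$ (one $\eps_{i_j}x_{i_j}s_{n+1}$ contribution per factor); and a leading $\tilde s_0$ with no explicit $\tilde s_{n+1}$ supplies $s_{n+1}$ either from the $\tfrac12(\sum_\ell\eps_\ell x_\ell^2)s_{n+1}$ term of $\tilde s_0$ or from an $\eps_{i_j}x_{i_j}s_{n+1}$ term, producing the two-part expression. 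The only labour here is sign bookkeeping under reordering and tracking the density factor $(X^0)^{r+1}$ relating the tractor frame to the frame $\{\ph_i\}$ of $T^*M$; both are routine.

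The homogeneous-model assertions in (1)--(3) then follow from Proposition \ref{prop:firstbgg}(4): on $S^p\times S^q$ every solution is normal and the solution space has dimension $\dim\Bbb V$, so the listed, manifestly linearly independent, projections form a basis (on the model each $\tilde s_a$, and hence each wedge or symmetric product, is globally parallel). The genuine obstacle is part (4), where the inducing bundle is the Cartan product $S^{k-1}\Bbb V\odot\La^{r+1}\Bbb V$ rather than the full tensor product. First I would build a frame of $S^{k-1}\Cal T\otimes\La^{r+1}\Cal T$ from the frames of (1) and (3), so that every parallel section is a constant-coefficient combination of its elements. The hard part will be to identify the Cartan-product summand as the joint kernel of the trace and wedge-type contractions and to show that the parallel sections lying in it project, under $\Pi$, to linear combinations of the part-(2)/(3) sections whose coefficients are homogeneous polynomials of degree $k-1$ in the $X^a$; this is exactly the elementary but structure-dependent representation theory flagged in the remark following Theorem \ref{thm:basic}, and arranging the recombination of frame elements so that the non-Cartan components drop out cleanly is where care is needed. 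Granting this, the projection of a product is the product of projections, combining the degree-$(k-1)$ polynomials coming from $S^{k-1}\Cal T$ with the form building blocks coming from $\La^{r+1}\Cal T$ to yield the stated result.
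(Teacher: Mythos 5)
Your treatment of parts (1)--(3) is essentially the paper's own proof: both arguments rest on the explicit parallel frame of Proposition \ref{prop:conf1}, on the fact (from Lemma \ref{lem:basic}, since $\exp(-X)$ acts functorially) that parallel sections of $S^k\Cal TM$ and $\La^{r+1}\Cal TM$ are constant--coefficient combinations of symmetric, respectively wedge, products of the $\tilde s_a$, and on multiplicativity of the projection $\Pi$; your enumeration of the four types of wedge products and of which terms survive $\Pi$ is exactly the paper's computation. One small caveat on the model statements: the dimension--count argument via Proposition \ref{prop:firstbgg}(4) is fine for (2) and (3), where indeed $\binom{n}{r-1}+2\binom{n}{r}+\binom{n}{r+1}=\binom{n+2}{r+1}=\dim\La^{r+1}\Bbb V$, but for (1) with $k\geq 2$ the degree--$k$ polynomials in $X^0,\dots,X^{n+1}$ are \emph{not} linearly independent (they satisfy $2X^0X^{n+1}+\sum_i\eps_i(X^i)^2=0$); there the theorem only claims that each such polynomial is a solution, which holds because the trace parts of $S^k\Cal TM$ project to zero under $\Pi$.

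Where you diverge from the paper is part (4), and there you have made the problem harder than it is: the step you defer (``Granting this\dots'') is not a difficult representation--theoretic step, and the joint--kernel identification you flag as ``the hard part'' is not needed at all. The paper proves (4) by reducing it verbatim to part (3) of Theorem \ref{thm:proj2}, exploiting that the statement is only a containment (``can be written as''), not a basis statement. Since $S^{k-1}\Bbb V\odot\La^{r+1}\Bbb V$ is a $G$--invariant subspace of $S^{k-1}\Bbb V\otimes\La^{r+1}\Bbb V$, the induced inclusion of tractor bundles intertwines the tractor connections, so any parallel section of the Cartan--product bundle is in particular a parallel section of the full tensor product, hence a constant--coefficient combination of decomposable elements built from the frames in (1) and (3). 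Moreover, the BGG projection of the Cartan--product bundle is simply the restriction of the tensor product of the two factor projections (both kill exactly the filtration components of homogeneity at least one), and this is multiplicative on decomposable sections; applying it termwise immediately produces coefficients which are homogeneous polynomials of degree $k-1$ in $X^0,\dots,X^{n+1}$. In particular, non--Cartan components never need to ``drop out'': frame elements of the tensor product that do not lie in the Cartan summand simply contribute further expressions of the same allowed form, which is harmless for an upper bound. The explicit identification of the Cartan summand and the careful recombination of frames that you propose is what the paper carries out only when a sharper, optimal list inside the Cartan summand is wanted, as in Proposition \ref{prop:proj3}; for Theorem \ref{thm:conf2}(4) as stated, the step you ``grant'' is essentially the whole proof, and it is immediate.
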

\begin{proof}
Since different normal frames for the normal chart $U$ are obtained
from each other via linear combinations with constant coefficients, it
suffices to prove each of the claims for one normal frame of the
bundle in question. We start with the normal frame
$\{s_0,\dots,s_{n+1}\}$ for the standard tractor bundle $\Cal TM$ from
Proposition \ref{prop:conf1}. As we have observed already, the
projection $\Cal TM\to\Cal E[1]$ maps $s_0,\dots,s_n$ to zero and
$s_{n+1}$ to a nowhere vanishing section $X^0$ of $\Cal E[1]$, which
constitutes a normal frame. Now the sections $\tilde s_i$ from
Proposition \ref{prop:conf1} project to the generalised homogeneous
coordinates $X^0,\dots,X^{n+1}$, which implies (1) for $k=1$.

To obtain (1) in the case $k\geq 2$, we just have to observe that
Proposition \ref{prop:conf1} immediately implies that any parallel
section of the bundle $S^k\Cal TM$ must be a linear combination with
constant coefficients of the symmetric products $\tilde
s_{i_1}\vee\dots\vee\tilde s_{i_k}$. Projecting such a product to the
quotient bundle $\Cal E[k]$, one obtains the product of the
projections of the individual factors.

\medskip

(2) The wedge products $s_i\wedge s_j$ with $0\leq i<j\leq n+1$ form a
normal frame for the bundle $\La^2\Cal TM$. The projection to the
irreducible quotient bundle $T^*M[2]\cong TM$ annihilates a wedge
product if either $i=0$ or $j<n+1$ and maps the sections $s_i\wedge
s_{n+1}$ for $i=1,\dots n$ to a normal frame of $TM$ and we use this
frame as $\{\xi_1,\dots,\xi_n\}$. On the other hand, Proposition
\ref{prop:conf1} implies that any parallel section of $\La^2\Cal TM$
must be a linear combination with constant coefficients of the
sections $\tilde s_i\wedge\tilde s_j$ for $0\leq i<j\leq
n+1$. Expanding the formulae for the $\tilde s_\ell$ from Proposition
\ref{prop:conf1} and using the observations on projections above, one
obtains the list in (2).

\medskip

(3) Here we consider the wedge products $s_{i_1}\wedge\dots\wedge
s_{i_{r+1}}$ which form a normal frame for $\La^{r+1}\Cal
TM$. Projecting to the quotient bundle $\La^rT^*M[r+1]$ kills any wedge
product with $i_1=0$ or $i_{r+1}<n+1$. The wedge products
$s_{i_1}\wedge\dots\wedge s_{i_r}\wedge s_{n+1}$ with $1\leq
i_1<\dots<i_r\leq n$ descend to a normal frame of $\La^rT^*M[r+1]$,
which we use as $\ph_{i_1\dots i_r}$. By naturality of normal frames,
this is of the form claimed in (3) for $\ph_i=(X^0)^{-2}\xi_i$ with
the vector fields $\xi_i$ from (2). Again by Proposition
\ref{prop:conf1}, any parallel section of $\La^{r+1}\Cal TM$ is a
linear combination with constant coefficients of the wedge products
$\tilde s_{i_1}\wedge\dots\wedge \tilde s_{i_{r+1}}$ and expanding
this using the formula for the $\tilde s$ from that proposition and
then projecting leads to the list in (3).
 
\medskip

(4) As we have observed above, the ($k$th order) first BGG operator on
$\La^rT^*M[r+k]$ comes from the tractor bundle induced by $S^{k-1}\Bbb
V\odot\La^{r+1}\Bbb V\subset S^{k-1}\Bbb V\otimes \La^{r+1}\Bbb V$. Thus
this part can be proved exactly as the projective counterpart in part
(3) of Theorem \ref{thm:proj2}.
\end{proof}

\begin{remark}\label{rem:cKforms}
  As in the projective case, one can use homogenisation to present the
  results in (2) and (3) in a more uniform way. For the case of
  conformal Killing vectors, we start with a normal frame
  $\{\bar\xi_1,\dots,\bar\xi_n\}$ for the bundle $T^*M[1]\cong
  TM[-1]$, and define $\bar\xi_{n+1}:=-\sum_{i=1}^nx_i\bar\xi_i$. Then
  we can write the sections in (2) as $X^0\bar\xi_i$ for
  $i=1,\dots,n+1$ and $X^j\bar\xi_i-X^i\bar\xi_j$ for $1\leq i< j\leq
  n+1$.

  For the conformal Killing forms in (3), one similarly looks at
  elements $\bar\ph_{i_1\dots i_r}:=(X^0)^r\ph_{i_1}\wedge\dots\wedge
  \ph_{i_r}$ for $1\leq i_1<\dots <i_r\leq n$ of the bundle
  $\La^rT^*M[r]$. For $1\leq i_1<\dots<i_{r-1}\leq n$, one then
  defines $\bar\ph_{i_1\dots i_r n+1}:=\sum_\ell
  x_\ell(X^0)^r\ph_{i_1}\wedge\dots\wedge
  \ph_{i_{r-1}}\wedge\ph_\ell$. Then the forms in (3) can be written
  as $X^0\bar\ph_{i_1\dots i_r}$ and
  $\sum_{j=0}^r(-1)^{j-1}X^{i_j}\bar\ph_{i_0\dots\widehat{i_j}\dots
    i_r}$ where the $i_\ell$ in both cases run through all strictly
  increasing sequences of integers between $1$ and $n+1$.
\end{remark}

\section*{Acknowledgements}
ARG gratefully acknowledges support from the Royal Society of
  New Zealand via Marsden Grant 10-UOA-113; A\v C and MH gratefully
  acknowledge support by projects P19500--N13 and P23244-N13 of the
  ``Fonds zur F\"orderung der wissenschaftlichen For\-schung'' (FWF) and
  the hospitality of the University of Auckland. The authors are also
  grateful for the support of the meetings ``Cartan Connections,
  Geometry of Homogeneous Spaces, and Dynamics'' hosted by the Erwin
  Schr\"odinger Institute (ESI, University of Vienna) and ``The
  Geometry of Differential Equations'' hosted by the Mathematical
  Sciences Institute of the Australian National University
\def\polhk#1{\setbox0=\hbox{#1}{\ooalign{\hidewidth
  \lower1.5ex\hbox{`}\hidewidth\crcr\unhbox0}}}

\end{document}